\newtheorem{theorem}{Theorem}[section]
\newtheorem{corollary}[theorem]{Corollary}
\newtheorem{proposition}[theorem]{Proposition}
\newtheorem{lemma}[theorem]{Lemma}
\newtheorem{question*}{Question}
\newtheorem{problem*}{Problem}
\theoremstyle{definition}
\theoremstyle{remark}
\newtheorem*{remark}{Remark}
\numberwithin{equation}{section}
\crefname{figure}{Figure}{Figures}
\theoremstyle{plain}
\newtheorem*{theorem*}{Theorem}
\crefname{theorems}{Theorem}{Theorems}
\crefname{corollaries}{Corollary}{Corollaries}
\newtheorem*{corollary*}{Corollary}
\crefname{corollaries*}{Corollary}{Corollaries}
\crefname{lemma}{Lemma}{Lemmata}
\crefname{proposition}{Proposition}{Propositions}
\crefname{conjectures}{Conjecture}{Conjectures}
\newtheorem*{conjonjecture*}{Conjecture}
\crefname{conjonjectures*}{Conjecture}{Conjectures}
\crefname{definitions}{Definition}{Definitions}
\crefname{hypotheses}{Hypothesis}{Hypotheses}
\newcommand{\R}{\mathbb{R}}
\newcommand{\Q}{\mathbb{Q}}
\newcommand{\re}{\textup{Re}}
\newcommand{\im}{\textup{Im}}
\newcommand{\GL}{\mathrm{GL}}
\renewcommand{\tilde}{\widetilde}
\renewcommand{\bar}{\overline}
\renewcommand{\epsilon}{\varepsilon}
\renewcommand{\Im}{\mathrm{Im}}
\renewcommand{\pmod}[1]{\, (\mathrm{mod} {\, #1})}
\renewcommand{\Re}{\mathrm{Re}}
\begin{document}

\title[Highly uniform prime number theorems]{Highly uniform prime number theorems}

\author{Ikuya Kaneko}
\address{Department of Mathematics, California Institute of Technology, 1200 E California~Blvd, Pasadena, CA 91125, USA}
\email{ikuyak@icloud.com}
\urladdr{\href{https://sites.google.com/view/ikuyakaneko/}{https://sites.google.com/view/ikuyakaneko/}}

\author{Jesse Thorner}
\address{Department of Mathematics, University of Illinois, Urbana, IL 61801, USA}
\email{jesse.thorner@gmail.com}

\begin{abstract}
We prove a highly uniform version of the prime number theorem for a certain class of $L$-functions.  The range of $x$ depends polynomially on the analytic conductor, and the error term is expressed in terms of an optimization problem depending explicitly on the available zero-free region.  The class contains the Rankin--Selberg $L$-function $L(s,\pi \times \pi')$ associated to cuspidal automorphic representations $\pi$ and $\pi'$ of $\mathrm{GL}_{m}$ and $\mathrm{GL}_{m'}$, respectively. Our main result implies the first uniform prime number theorems for such $L$-functions (with analytic conductor uniformity) in complete generality.
\end{abstract}

\maketitle

\section{Introduction and statement of the main result}
\label{introduction}

We prove prime number theorems for a certain class of $L$-functions possessing a Dirichlet series, Euler product, analytic continuation, and functional equation of the usual type with strong uniformity in the analytic conductor.  This problem has received attention before (see Iwaniec and Kowalski \cite[Section 5.6]{IK}), but our work provides several new and substantial improvements.  The class that we consider is slightly more restrictive than the class $\mathcal{S}(m)$ considered by Soundararajan and Thorner \cite[Sections 1.1--1.4]{SoundararajanThorner2019}.  Given an integer $m\geq 1$, $\mathcal{S}(m)$ denotes the set of $L$-functions satisfying the following four properties (A)--(D):
\begin{enumerate}[(A)]
	\item ({\it Dirichlet series and Euler product}.)  Let $p$ run over the primes.  The $L$-function $L(s,\pi)$ is given by a Dirichlet series and an Euler product
	\[
		\label{eqn:series_product}
		L(s,\pi) = \sum_{n = 1}^{\infty} \frac{\lambda_{\pi}(n)}{n^s}=\prod_{p} \prod_{j=1}^m \frac{1}{1-\alpha_{j,\pi}(p)p^{-s}},
	\]
	both converging absolutely for $\re(s)>1$.  Let $\Lambda(n)$ be the von Mangoldt function.  We define the function $a_{\pi}(n)$, supported on prime powers, by the identity
	\[
	-\frac{L'}{L}(s,\pi) = \sum_{n=1}^{\infty}\frac{a_{\pi}(n) \Lambda(n)}{n^s} = \sum_{p}\sum_{k=1}^{\infty}\frac{\sum_{j=1}^m \alpha_{j,\pi}(p)^k \log p}{p^{ks}},\qquad\re(s)>1.
	\]
	\item ({\it Analytic continuation and functional equation}.)  There exist an integer $q_{\pi}\geq 1$ attached to $\pi$, called the {\it conductor} of $\pi$, and complex numbers $\mu_{\pi}(j)$ for $1\leq j\leq m$ such that if
	\[
	r_{\pi} =  -\mathop{\mathrm{ord}}_{s=1}L(s,\pi)\in [0,m]\quad\textup{and}\quad L(s,\pi_{\infty})= \pi^{-\frac{ms}{2}}\prod_{j=1}^m \Gamma\Big(\frac{s+\mu_{\pi}(j)}{2}\Big),
	\]
	then
	\[
	\Lambda(s,\pi)= (s(1-s))^{r_{\pi}} q_{\pi}^{s/2} L(s,\pi)L(s,\pi_{\infty})
	\]
	is an entire function of order $1$.   Moreover, there exists a complex number $\kappa_{\pi}$ of modulus~$1$ such that $\Lambda(s,\pi) = \kappa_{\pi}\Lambda(1-s,\tilde{\pi})$, where
	\[
	q_{\tilde{\pi}}=q_{\pi},\qquad \{\mu_{\tilde{\pi}}(j)\}=\{\overline{\mu_{\pi}(j)}\},\qquad \{\alpha_{\tilde{\pi},j}(p)\}=\{\overline{\alpha_{\pi,j}(p)}\}.
	\]
	We define the {\it analytic conductor}
	\begin{equation}
	\label{eqn:analytic_cond}
	C(\pi)= q_{\pi}\prod_{j=1}^{m}(|\mu_{\pi}(j)|+3),
	\end{equation}
	which serves as a key measure of ``complexity'' for $L(s,\pi)$.  The zeros of $\Lambda(s,\pi)$ are the {\it nontrivial} zeros of $L(s,\pi)$, and the poles of $s^{r_{\pi}}L(s,\pi_{\infty})$ are the {\it trivial} zeros of $L(s,\pi)$.  If $p\nmid q_{\pi}$, then for all $1\leq j\leq m$, we have that $\alpha_{j,\pi}(p)\neq 0$.  If $p|q_{\pi}$, then at least one of the $\alpha_{j,\pi}(p)$ equals $0$.
	\item ({\it Pointwise bounds on local parameters}.) If $1\leq j\leq m$ and $p$ is prime, then
	\[
	|\alpha_{j,\pi}(p)|\leq p^{1-\frac{1}{m}},\qquad\re(\mu_{\pi}(j))\geq -1+\frac{1}{m}.
	\]
	\item ({\it $\ell^1$ estimates}.)  There exists a constant\footnote{The numbers $c_1, c_2, c_3, \ldots$ form a sequence of certain positive, absolute and effectively computable constants.  The notation $f\ll_{\nu} g$ or $f=O_{\nu}(g)$ means that there exists an effectively computable constant $c=c(\nu)>0$, depending at most on the parameter $\nu$, such that $|f(z)|\leq c|g(z)|$ for all $z$ in a range that is clear from context.  If no parameter $\nu$ is present, then $c$ is absolute.} $\Cl[abcon]{xrange}$ such that if $\eta>0$ and $T\geq 1$, then
	\[
	\sum_{n=1}^{\infty}\frac{|a_{\pi}(n)| \Lambda(n)}{n^{1+\eta}}\leq \frac{m}{\eta}+m\log C(\pi)+O(m^2)
	\]
	and
	\begin{equation}
	\label{eqn:new_selberg_sieve_ST}
	\sum_{x<n\leq xe^{1/T}}|a_{\pi}(n)| \Lambda(n)\ll m\frac{x}{T},\quad\textup{provided that $x\geq \Cr{xrange}m^{182 m^4} (C(\pi)T)^{144m^3}$}.
	\end{equation}
\end{enumerate}
\begin{remark}
In the version of (D) in \cite{SoundararajanThorner2019}, it is only assumed that there exist certain unspecified constants $c(m)>0$ and $c'(m)>0$, depending at most on $m$, such that
\begin{equation}
	\label{eqn:old_selberg_sieve_ST}
\sum_{x<n\leq xe^{1/T}}|a_{\pi}(n)| \Lambda(n)\leq c(m) \frac{x}{T},\quad\textup{provided that $x\geq c'(m) (C(\pi)T)^{144m^3}$}.
\end{equation}
In \eqref{eqn:new_selberg_sieve_ST}, we assume that $c(m)$ and $c'(m)$ depend on $m$ in a particular way.
\end{remark}

The $L$-functions that we consider here satisfy two additional properties that are not part of the definition of $\mathcal{S}(m)$ in \cite{SoundararajanThorner2019}.

\begin{enumerate}[(E)]
	\item ({\it Nonvanishing on $\re(s)=1$}.)  If $\re(s)\geq 1$, then $L(s,\pi)\neq 0$.  Consequently, there exists a function
	\[
	\delta_{\pi}\colon[0,\infty)\to (0,\tfrac{1}{2})
	\]
	such that if $T>0$, then $L(s,\pi)\neq 0$ in the region
	\[
	\{s\in\mathbb{C}\colon \re(s)\geq 1-\delta_{\pi}(T+3),~|\im(s)|\leq T\}
	\]
	except for at most one real zero.
\end{enumerate}
\begin{enumerate}[(F)]
	\item ({\it Zero repulsion}.)
	Let $\delta_{\pi}$ be as in (E).  Define
	\[
	\beta_0=\max(\{\beta>\max\{\tfrac{3}{4},1-\delta_{\pi}(3)\}\colon L(\beta,\pi)=0\}\cup\{\tfrac{1}{2}\}).
	\]
	If $\beta_0> \frac{1}{2}$, then
	\begin{enumerate}[(i)]
	\item $\beta_0$ is a simple zero of $L(s,\pi)$,
	\item  there exists a constant $\Cl[abcon]{Siegel_effective} \geq 1$ such that $\beta_0\leq 1-C(\pi)^{-\Cr{Siegel_effective}m}$, and
	\item there exist constants $\Cl[abcon]{Siegel_1}$ and $\Cl[abcon]{Siegel_2}$ such that if $\rho=\beta+i\gamma\neq \beta_0$ is a nontrivial zero of $L(s,\pi)$, then
	\[
	\beta\leq 1-\Cr{Siegel_1}\dfrac{\log\Big(\dfrac{\Cr{Siegel_2}}{(1-\beta_0)m\log(C(\pi)(|\gamma|+3)^m)}\Big)}{m\log(C(\pi)(|\gamma|+3)^m)}.
	\]
	\end{enumerate}
\end{enumerate}
\begin{remark}
If (E) holds, then there are infinitely many choices of $\delta_{\pi}$ such that $\beta_0=\frac{1}{2}$.  Property (F) asserts that there exists a $\delta_{\pi}$ in (E) such if $\beta_0>\frac{1}{2}$, then $\beta_0$ is a simple zero.
\end{remark}
\begin{remark}
As in \cite{SoundararajanThorner2019}, one has some latitude in the formulation of (A)--(F).  Our formulation is based on what we can prove when $L(s,\pi)$ is the $L$-function of a cuspidal automorphic representation or the Rankin--Selberg $L$-function associated to a pair of such representations.
\end{remark}

We define $\mathfrak{S}(m)$ to be the set of $L$-functions $L(s,\pi)$ that satisfy (A)--(F).  Condition (E) is equivalent to the prime number theorem for $L(s,\pi)\in\mathfrak{S}(m)$, namely
\[
\lim_{x\to\infty}\frac{1}{x}\sum_{n\leq x}a_{\pi}(n)\Lambda(n) = r_{\pi}.
\]
We prove a highly uniform version of the prime number theorem for all $L(s,\pi)\in\mathfrak{S}(m)$.
\begin{theorem}
	\label{thm:main}
	There exist constants $\Cl[abcon]{main1}\geq 1$, $\Cl[abcon]{main2}$, and $\Cl[abcon]{main3}\geq 1$ such that the following is true.  Let $m\geq 1$, and let $L(s,\pi)\in \mathfrak{S}(m)$.  Let $\delta_{\pi}(t)$ be given by (E) and $\beta_0$ by (F), and define
	\begin{equation}
	\label{eqn:eta_pi_def}
	\eta_{\pi}(x)= \inf_{t\geq 3}(\delta_{\pi}(t)\log x+ \log t).
	\end{equation}
	If $A\geq 2$ and $x\geq C(\pi)^{\Cr{main1}A^2 m^5}$, then
	\[
	\sum_{n\leq x}a_{\pi}(n) \Lambda(n) = r_{\pi}x-\frac{x^{\beta_0}}{\beta_0}+O\Big(\Big(x-\frac{x^{\beta_0}}{\beta_0}\Big)(m^5 x^{-\Cr{main2}/m^{4}}+m^{\Cr{main3}m^3}A^2 e^{-(1-\frac{1}{A})\eta_{\pi}(x)})\Big).
	\]
\end{theorem}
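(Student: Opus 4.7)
The strategy is the classical explicit formula via Mellin inversion and contour shifting, carried out with enough care to make all dependence on $m$ and $C(\pi)$ polynomial and explicit.  As a first step I pass from the sharp sum $\psi_\pi(x):=\sum_{n\le x}a_\pi(n)\Lambda(n)$ to a smoothed analogue $\psi_\pi(x;\phi):=\sum_n a_\pi(n)\Lambda(n)\phi(n/x)$, where $\phi$ is a fixed compactly supported smooth weight approximating the indicator of $[0,1]$ with transition width $1/U$.  The difference between the two sums is a short sum of the type controlled by \eqref{eqn:new_selberg_sieve_ST} with $T=U$; choosing $U$ to be a small power of $x$ depending polynomially on $m$ yields the $m^5 x^{-\Cr{main2}/m^4}$ contribution to the error, and the hypothesis $x\ge C(\pi)^{\Cr{main1}A^2 m^5}$ is what is needed for \eqref{eqn:new_selberg_sieve_ST} to apply at this choice of $U$.

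Next, I apply Mellin inversion to the smoothed sum:
$$\psi_\pi(x;\phi) \;=\; \frac{1}{2\pi i}\int_{(c)}\Big(-\frac{L'}{L}(s,\pi)\Big)\Phi(s)\,x^s\,ds,$$
where $c>1$ and $\Phi(s):=\int_0^\infty \phi(u)u^{s-1}\,du$ satisfies $|\Phi(\sigma+it)|\ll_A U^{A-1}(1+|t|)^{-A}$ uniformly for $\sigma$ in any bounded strip.  I then shift the contour to the vertical line $\re(s)=1-\delta_\pi(T+3)$ for a truncation height $T\ge 3$, closing with two horizontal segments.  The simple pole of $-L'/L(s,\pi)$ at $s=1$ contributes $r_\pi x$, and by (F)(i) the possibly exceptional real zero $\beta_0$ is simple and contributes $-x^{\beta_0}/\beta_0$.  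By (E) and (F)(iii), every other nontrivial zero with $|\im s|\le T$ lies strictly to the left of the shifted contour, so no additional residues appear.

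The central technical step is bounding the shifted integral.  Using the Hadamard product for $\Lambda(s,\pi)$ together with the Riemann--von Mangoldt zero count implicit in (B), one obtains $|L'/L(s,\pi)|\ll m\log(C(\pi)(|\im s|+3)^m)$ plus local terms from nearby zeros; these local terms are tamed by the quantitative zero-repulsion in (F)(iii), which pushes all nonexceptional zeros back when $\beta_0$ is close to $1$.  This repulsion is precisely what lets us replace a bound of shape $x\cdot(\mathrm{small})$ by $(x-x^{\beta_0}/\beta_0)\cdot(\mathrm{small})$.  After integrating against $\Phi$ and optimizing $T$, the problem reduces to minimizing $\delta_\pi(T+3)\log x+\log T$, whose infimum is $\eta_\pi(x)$ by \eqref{eqn:eta_pi_def}; the factor $1-1/A$ in the exponent reflects the loss incurred when one trades a fraction of the Mellin decay in $\Phi$ for control of the polynomial growth of $L'/L$ in $|\im s|$.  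The main obstacle is the uniform bookkeeping: the factors $m^{\Cr{main3}m^3}$ and the exponent $1/m^4$ arise from combining the $\ell^1$ bound \eqref{eqn:new_selberg_sieve_ST} with the Hadamard product estimates and the zero count, and keeping these constants explicit in $m$ is what distinguishes this result from the softer formulation in \cite{SoundararajanThorner2019}.
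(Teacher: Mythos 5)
Your proposal outlines the classical Iwaniec--Kowalski route (Theorem 5.13 of \cite{IK}): shift the line of integration to $\re(s)=1-\delta_{\pi}(T+3)$, bound $L'/L$ on the shifted contour via the Hadamard product and the Riemann--von Mangoldt zero count, and then optimize the truncation height $T$. That method does produce an error term of the form $e^{-c\eta_{\pi}(x)}$, but it cannot deliver the claimed range $x\geq C(\pi)^{\Cr{main1}A^2 m^5}$. The reason is quantitative: bounding the integrand pointwise by $\ll m\log(C(\pi)T)$ and multiplying by the contour length forces an extra factor of roughly $T\log(C(\pi)T)$ into the error term, and optimizing $T$ against $x^{-\delta_{\pi}(T)}$ then requires $x$ to exceed $C(\pi)$ raised to a power that grows with $\log\log C(\pi)$ --- this is precisely the restriction in the displayed sentence after \cref{thm:IK}. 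The paper emphasizes in the introduction that the polynomial range stems from ``our utilization of a log-free zero density estimate,'' and this is the ingredient your proposal omits.

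The actual argument (\cref{lem:residue,remaining}) never shifts to the edge of a zero-free region. Instead, an explicit formula expresses the smoothed sum as $r_{\pi}F(-\log x)-F(-\beta_0\log x)-\sum'_{\rho}F(-\rho\log x)$ over \emph{all} nontrivial zeros, and the rapid polynomial decay of the Laplace transform $F$ (the exponent $\ell(1-\beta)$ in \cref{lem:WeightChoice}(iv), with $\ell\asymp Am^3$) is then played against the log-free bound $N^{*}_{\pi}(\sigma,T)\ll\nu_{\pi}(T)m^{\Cr{LFZDE_repulsion}m^3}(C(\pi)T)^{\Cr{LFZDE_repulsion}m^3(1-\sigma)}$ from \cref{cor:LFZDE_repulsion} in a partial-summation argument over $\sigma$ and a dyadic sum over $T$. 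The factor $(1-\tfrac{1}{A})$ arises from splitting $|\rho|^{-1}\leq T_j^{-1/A}(|\gamma|+3)^{1/A-1}$, reserving the $T_j^{-1/A}$ for convergence of the dyadic sum; the factor $\nu_{\pi}(T)=\min\{1,(1-\beta_0)\log(C(\pi)T)\}$ is where zero repulsion enters, through the \emph{density estimate} (not through pointwise estimates for the Hadamard local terms as you suggest). A separate computation (\cref{lem:nu_pi}) then shows $\nu_{\pi}(1)x\ll x-x^{\beta_0}/\beta_0$, which is what converts the error to the stated $(x-x^{\beta_0}/\beta_0)\cdot(\text{small})$ form; your heuristic that this substitution falls out of repulsion taming local Hadamard terms would not survive being made rigorous. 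In short, the missing idea is the replacement of contour shifting by a sum over zeros controlled by a log-free zero-density estimate with built-in Deuring--Heilbronn improvement; without it, neither the polynomial range in $C(\pi)$ nor the $m^{\Cr{main3}m^3}$ dependence is attainable.
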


It is natural to compare \cref{thm:main} with the following result of Iwaniec and Kowalski, which we present in our notation using  properties (A)--(F).
\begin{theorem}[{\cite[Theorem 5.13]{IK}}]
\label{thm:IK}
There exists a constant $\Cl[abcon]{IK}$ such that the following is true. Let $L(s,\pi)$ satisfy (A), (B), (E) with
\[
\delta_{\pi}(T) = \frac{\Cr{IK}}{m^4\log(C(\pi)T)},
\]
and the $\ell^2$ estimate
\begin{equation}
\label{eqn:IK}
\sum_{n\leq x}|a_{\pi}(n)|^2 \Lambda(n)^2\ll m^2 x (\log(C(\pi)x))^2,\qquad x\geq 1.
\end{equation}
Let $\beta_0$ be as in (F).  If $x\geq 3$, then
\[
\sum_{n\leq x}a_{\pi}(n) \Lambda(n) = r_{\pi}x-\frac{x^{\beta_0}}{\beta_0} + O\Big(m^4 (\log xC(\pi))^4 x\exp\Big(-\Cr{IK}\frac{\log x}{m^4(\log C(\pi)+\sqrt{\log x})}\Big)\Big).
\]
The $O$-term is nontrivial when $x\geq C(\pi)^{4\Cr{IK}^{-1}m^4\log(m\log C(\pi))}$.
\end{theorem}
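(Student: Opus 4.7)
The plan is to carry out the classical explicit-formula proof under the given de la Vall\'ee Poussin zero-free region. I would fix a height $T\geq 3$, write $\delta = \Cr{IK}/(m^4 \log(C(\pi)T))$ for the width supplied by (E), and set $c_0 = 1+1/\log x$. A truncated Perron formula applied to the Dirichlet series $-L'/L(s,\pi) = \sum_n a_\pi(n)\Lambda(n)/n^s$ then gives
\[
\sum_{n\leq x}a_\pi(n)\Lambda(n) = \frac{1}{2\pi i}\int_{c_0 - iT}^{c_0 + iT}\Big(-\frac{L'}{L}(s,\pi)\Big)\frac{x^s}{s}\,ds + E_1(x,T),
\]
with $E_1(x,T) \ll (x/T)\sum_{n\geq 1}|a_\pi(n)|\Lambda(n)n^{-c_0}\min(1,1/(T|\log(x/n)|))$ plus a diagonal contribution from $n$ close to $x$. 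Both pieces are treated by Cauchy--Schwarz together with the $\ell^2$ hypothesis \eqref{eqn:IK}, producing $E_1(x,T) \ll m^2 x (\log C(\pi)x)^3/T$.

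Next I would deform the contour to the segment $\sigma = 1-\delta$, $|t|\leq T$, together with horizontal caps at $t = \pm T$. Within the rectangle the integrand has a simple pole at $s=1$ of residue $r_\pi x$ and, when $\beta_0 > 1/2$, a simple pole at $s=\beta_0$ of residue $-x^{\beta_0}/\beta_0$; every other nontrivial zero is excluded by (E). On the new vertical segment I would control $L'/L(s,\pi)$ through the Hadamard-factorization identity
\[
-\frac{L'}{L}(s,\pi) = -\sum_{\rho}\frac{1}{s-\rho} + O\bigl(m\log(C(\pi)(|t|+3))\bigr),
\]
valid for $-1 \leq \sigma \leq 2$ away from zeros, combined with the local zero count $\#\{\rho\colon|\gamma-t|\leq 1\} \ll m\log(C(\pi)(|t|+3))$. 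After the standard small circular detours and the uniform bound $|s-\rho|^{-1}\ll 1/\delta$ on the segment, these yield
\[
\int_{1-\delta-iT}^{1-\delta+iT}\Big|\frac{L'}{L}(s,\pi)\Big|\frac{x^{1-\delta}}{|s|}\,|ds| \ll x^{1-\delta} m^2(\log C(\pi)T)^3,
\]
with each horizontal cap contributing a further factor $1/T$ less.

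Combining both estimates, the total error is
\[
\ll \frac{m^2 x (\log C(\pi)x)^3}{T} + m^2 x (\log C(\pi)T)^3 \exp\Big(-\frac{\Cr{IK}\log x}{m^4 \log(C(\pi)T)}\Big).
\]
The final step is to optimize in $T$: balancing the two terms amounts to solving $\log T \asymp \Cr{IK}\log x/(m^4 \log(C(\pi)T))$, whose solution satisfies $\log T \asymp \sqrt{\log x}/m^2$ when $\log C(\pi) \lesssim \sqrt{\log x}$, and $\log T \asymp \log x/(m^4 \log C(\pi))$ otherwise; in both regimes one recovers the stated saving $\exp(-c_*\log x/(m^4(\log C(\pi)+\sqrt{\log x})))$ for an appropriate positive absolute constant $c_*$, while the prefactor $m^4(\log xC(\pi))^4$ absorbs the logarithmic losses from the contour and the residue analysis. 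The main obstacle lies in the simultaneous Hadamard bound for $L'/L$ and the local zero count: it is this pair that produces the $m^4$ factor in the exponent and the $m^2(\log C(\pi)T)^3$ factor on the contour. Both rest on the functional equation from (B) and Jensen's inequality on a unit disk centered at $2+it$, where properties (A) and (C) guarantee absolute convergence of the Euler product on $\sigma\geq 2$.
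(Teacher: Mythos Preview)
The paper does not prove this theorem; it is quoted from Iwaniec--Kowalski \cite[Theorem~5.13]{IK} purely for comparison with \cref{thm:main}, and no argument for it appears anywhere in the text. Your sketch is a reasonable reconstruction of the classical proof. Iwaniec--Kowalski themselves shift the Perron contour far to the left and work with the explicit formula $\sum_{|\gamma|\leq T} x^{\rho}/\rho$, bounding that sum via the zero-free region and the local zero count, whereas you shift only to the edge $\sigma=1-\delta$ and bound $L'/L$ there directly via the Hadamard expansion; the two routes are essentially equivalent and yield the same final shape.

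Two imprecisions are worth flagging. First, on the segment $\sigma=1-\delta$ you assert $|s-\rho|^{-1}\ll 1/\delta$, but the zero-free region only forbids zeros with $\beta\geq 1-\delta$ and says nothing about zeros immediately to the left of that line; the standard remedy is to place the vertical contour at $\sigma=1-\delta/2$, so that every nearby zero has $\beta\leq 1-\delta$ and hence $|s-\rho|\geq\delta/2$. Second, the $m^4$ in the exponent is not generated by the Hadamard bound or the zero count in your argument; it is inherited directly from the hypothesised width $\delta_\pi(T)=\Cr{IK}/(m^4\log(C(\pi)T))$, and your contour estimates only affect the polynomial prefactor.
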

\cref{thm:main} has many advantages over \cref{thm:IK}.  First, if one thinks of $m$ as fixed (as is typical in many applications, but not all), then the range of $x$ in \cref{thm:main} depends polynomially on $C(\pi)$, unlike \cref{thm:IK}.  This is comparable with Linnik's theorem \cite{Linnik}, which states that if $q\geq 1$ and $\gcd(a,q)=1$, then there exists a constant $\Cl[abcon]{Linnik}$ such that the counting function $\pi(x;q,a)$ for primes $p\equiv a\pmod{q}$ is positive once $x\geq q^{\Cr{Linnik}}$.  Second, if $\beta_0$ is especially close to $s=1$, then the error term in~\cref{thm:main} improves, unlike \cref{thm:IK}.  This is a general extension of the zero repulsion phenomenon of Deuring and Heilbronn for Dirichlet $L$-functions, which served a crucial role in Linnik's work \cite{Linnik}.  Until now, such a quantitative manifestation of this phenomenon has only been available when $m=1$ (see \cite[Theorem 1.4]{ThornerZaman2019}).  Third, there are many important $L$-functions that are not yet known to satisfy the $\ell^2$ bound \eqref{eqn:IK} in \cref{thm:IK}, but the $\ell^1$ bounds in (D) and the pointwise bounds in (C) are known quite generally.  Fourth, \cref{thm:main} produces prime number theorems for $L$-functions having zero-free regions that are weaker than what \cref{thm:IK} assumes.

Ultimately, \cref{thm:main} reduces the problem of establishing a prime number theorem for $L(s,\pi) \in \mathfrak{S}(m)$ to the estimation of $\eta_{\pi}(x)$.  This is a straightforward optimization~calculation depending only on the available zero-free region.  This feature, as well as the improved~range of $x$, stems from our utilization of a log-free zero density estimate that follows from properties (A)--(D).  In \cref{sec:Applications}, we catalogue the most uniform versions to date of the prime number theorems that follow from \cref{thm:main} for the standard $L$-function $L(s,\pi)$ and the Rankin--Selberg $L$-function $L(s,\pi\times\pi')$ associated to cuspidal automorphic representations $\pi$ of $\mathrm{GL}_m(\mathbb{A}_{\Q})$ and $\pi'$ of $\mathrm{GL}_{m'}(\mathbb{A}_{\Q})$.  When neither $\pi$ nor $\pi'$ is self-dual, our prime number theorem for $L(s,\pi\times\pi')$ is completely new.  \cref{sec:zeros} assembles various results on zeros of $L$-functions in $\mathfrak{S}(m)$, including a log-free zero density estimate that improves as $\beta_0$ worsens.  In \cref{sec:proof_of_main_theorem}, we prove \cref{thm:main}.  The results in \cref{sec:Applications} are proved in \cref{sec:ZFR,sec:final_proofs}.

\subsection*{Acknowledgements}\label{acknowledgements}
The authors acknowledge the support of the National Science Foundation (DMS 2002265 and DMS 205118), the National Security Agency (H98230-21-1-0059), the Thomas Jefferson Fund at the University of Virginia, and the Templeton World Charity Foundation. IK thanks the Masason Foundation and the Spirit of Ramanujan STEM Talent Initiative. This research was conducted as part of the Research Experience for Undergraduates at the University of Virginia in 2021.  We thank the anonymous referee for helpful comments.

\section{Applications}
\label{sec:Applications}

Let $\mathfrak{F}_m$ denote the family of cuspidal automorphic representations of $\mathrm{GL}_m(\mathbb{A}_{\mathbb{Q}})$ possessing unitary central character, normalised so that the central character is trivial on the diagonally embedded copy of the positive reals.  Let $\pi=\pi_{\infty}\otimes(\otimes_p \pi_p)\in \mathfrak{F}_m$ have arithmetic conductor $q_{\pi}\geq 1$, where $\pi_p$ (resp. $\pi_{\infty}$) is a smooth admissible representation of $\GL_m(\Q_p)$ for every prime $p$ (resp. $\GL_m(\R)$).  The standard $L$-function $L(s,\pi)$ associated to $\pi$ can be expressed as a Dirichlet series and an Euler product, each absolutely convergent for $\re(s)>1$:
\[
L(s,\pi) = \prod_p L(s,\pi_p) = \sum_{n=1}^{\infty}\frac{\lambda_{\pi}(n)}{n^s},\qquad L(s,\pi_p)=\prod_{j=1}^m \frac{1}{1-\alpha_{j,\pi}(p)p^{-s}}.
\]
Here $\lambda_{\pi}(n)$ is the $n$-th Hecke eigenvalue of $\pi$.  When $p\nmid q_{\pi}$, the Satake isomorphism assigns to $\pi_p$ the eigenvalues $\{\alpha_{1,\pi}(p),\ldots,\alpha_{m,\pi}(p)\}$ of a certain semisimple conjugacy class in $\GL_m(\mathbb{C})$.  If $p|q_{\pi}$, then some of the $\alpha_{j,\pi}(p)$ might equal zero.  We define the numbers $a_{\pi}(n)$ by
\[
\sum_{n=1}^{\infty}\frac{a_{\pi}(n) \Lambda(n)}{n^s} = -\frac{L'}{L}(s,\pi) = \sum_{p}\sum_{k=1}^{\infty}\frac{\sum_{j=1}^m\alpha_{j,\pi}(p)^k\log p}{p^{ks}},\qquad\re(s)>1,
\]
where $\Lambda(n)$ denotes the usual von Mangoldt function.  We define $a_{\pi}(n)=0$ when $n$ is not a prime power.  If $p$ is prime, then $a_{\pi}(p)=\lambda_{\pi}(p)$.  There are $m$ Langlands parameters $\mu_{\pi}(j)$, $1\leq j\leq m$, from which we define
\[
L(s,\pi_{\infty}) = \pi^{-\frac{ms}{2}}\prod_{j=1}^m \Gamma\Big(\frac{s+\mu_{\pi}(j)}{2}\Big).
\]
If $\tilde{\pi}\in\mathfrak{F}_m$ is the contragredient representation, then $\tilde{\pi}\in\mathfrak{F}_{m}$ and
\[
q_{\tilde{\pi}}=q_{\pi},\qquad \{\mu_{\tilde{\pi}}(j)\}=\{\overline{\mu_{\pi}(j)}\},\qquad \{\alpha_{\tilde{\pi},j}(p)\}=\{\overline{\alpha_{\pi,j}(p)}\}.
\]
We denote by $\mathbbm{1}\in\mathfrak{F}_1$ the trivial representation, whose $L$-function is $\zeta(s)$.

Given $\pi\in\mathfrak{F}_m$ with conductor $q_{\pi}$ and $\pi'\in\mathfrak{F}_{m'}$ with conductor $q_{\pi'}$, consider the Rankin--Selberg $L$-function
\[
L(s,\pi\times \pi')=\prod_{p}L(s,\pi_p\times\pi_p')=\sum_{n=1}^{\infty}\frac{\lambda_{\pi\times\pi'}(n)}{n^s},
\]
absolutely convergent for $\mathrm{Re}(s)>1$, with
\[
L(s,\pi_p\times\pi_p')=\begin{cases}
	\displaystyle\prod_{j=1}^m \prod_{j'=1}^{m'} (1-\alpha_{j,\pi}(p)\alpha_{j',\pi'}(p)p^{-s})^{-1}&\mbox{if $p\nmid q_{\pi}q_{\pi'}$,}\\
	\displaystyle\prod_{j=1}^m \prod_{j'=1}^{m'} (1-\alpha_{j,j',\pi\times\pi'}(p)p^{-s})^{-1}&\mbox{if $p|q_{\pi}q_{\pi'}$.}
\end{cases}
\]
See \cite[Appendix]{SoundararajanThorner2019} for a complete description of the numbers $\alpha_{j,j',\pi\times\pi'}(p)$ when $p|q_{\pi}q_{\pi'}$.  The conductor $q_{\pi\times\pi'}$ divides $q_{\pi}^{m'}q_{\pi'}^{m}$ \cite{BushnellHenniart1997}. The $L$-function $L(s,\pi\times\pi')$ analytically continues to $\mathbb{C}$.  By our normalization of the central characters, $L(s,\pi\times\pi')$ is entire unless $\pi'=\tilde{\pi}$, in which case there is a pole of order $1$ at $s=1$.  There are $m'm$ Langlands parameters $\mu_{\pi\times\pi'}(j,j')$, with $1\leq j\leq m$ and $1\leq j'\leq m'$, such that
\[
L(s,\pi_{\infty}\times\pi_{\infty}') = \pi^{-\frac{m'ms}{2}}\prod_{j=1}^m \prod_{j'=1}^{m'}\Gamma\Big(\frac{s+\mu_{\pi,\pi'}(j,j')}{2}\Big).
\]
If $\pi_{\infty}$ and $\pi_{\infty}'$ are unramified, then
\[
\{\mu_{\pi,\pi'}(j,j')\}=\{\mu_{\pi}(j)+\mu_{\pi'}(j')\}.
\]
See \cite[Section 3]{MullerSpeh2004} for a complete description of the numbers $\mu_{\pi\times\pi'}(j,j')$ when at least one of $\pi_{\infty}$ and $\pi_{\infty}'$ is ramified.  We define the numbers $a_{\pi\times\pi'}(n)$ by the identity
\[
\sum_{n=1}^{\infty}\frac{a_{\pi\times\pi'}(n)\Lambda(n)}{n^s}=-\frac{L'}{L}(s,\pi\times\pi').
\]
The sum converges absolutely for $\re(s)>1$, and
\[
a_{\pi\times\pi'}(p)=\lambda_{\pi\times\pi'}(p).
\]

We require bounds for $C(\pi\times\pi')$ in terms of $C(\pi)$, $C(\pi')$, $C(\pi\times\tilde{\pi})$, and $C(\pi'\times\tilde{\pi}')$.
\begin{lemma}
	\label{lem:AC}
If $\pi\in\mathfrak{F}_m$ and $\pi'\in\mathfrak{F}_{m'}$, then
\[
C(\pi\times\tilde{\pi})^{\frac{m'}{4m}}C(\pi'\times\tilde{\pi}')^{\frac{m}{4m'}}\leq C(\pi\times\pi')\leq C(\pi)^{m'}C(\pi')^{m}.
\]
\end{lemma}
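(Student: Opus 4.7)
The plan is to factor each conductor as $C(\pi) = q_\pi C_\infty(\pi)$ with $C_\infty(\pi)=\prod_{j}(|\mu_\pi(j)|+3)$, and verify each inequality for the arithmetic and archimedean pieces separately.

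For the upper bound, Bushnell--Henniart's theorem gives the divisibility $q_{\pi\times\pi'}\mid q_\pi^{m'}q_{\pi'}^m$, as already recorded just before the lemma. On the archimedean side, whether or not $\pi_\infty$ and $\pi_\infty'$ are ramified, one has $|\mu_{\pi\times\pi'}(j,j')|\leq|\mu_\pi(j)|+|\mu_{\pi'}(j')|$: in the unramified case this is the additive identity displayed in the text, and the same bound is extracted from the general description given in \cite[Section~3]{MullerSpeh2004}. The elementary inequality
\[
|a+b|+3 \leq (|a|+3)(|b|+3) \qquad(a,b\in\mathbb{C}),
\]
then yields $C_\infty(\pi\times\pi')\leq C_\infty(\pi)^{m'} C_\infty(\pi')^m$ after multiplying over all pairs $(j,j')$. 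Combining the two pieces gives the stated upper bound.

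For the lower bound, the plan is to verify the place-by-place inequality
\[
c_v(\pi\times\tilde\pi)^{m'/(4m)}\, c_v(\pi'\times\tilde\pi')^{m/(4m')} \leq c_v(\pi\times\pi'),
\]
where $c_v$ denotes the local contribution to the analytic conductor. At any place $v$ where both $\pi$ and $\pi'$ are unramified the three factors are all equal to $1$, so there is nothing to prove. At a ramified finite place, one combines the Bushnell--Henniart formula for the local Rankin--Selberg conductor exponent with a Cauchy--Schwarz-type comparison between $a_v(\pi\times\pi')$ on the one hand and $a_v(\pi\times\tilde\pi)$, $a_v(\pi'\times\tilde\pi')$ on the other. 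At the archimedean place, property~(C) bounds $|\re\mu_\pi(j)|$ and $|\re\mu_{\pi'}(j')|$ uniformly by $1$, after which the desired inequality reduces to a direct estimate among the shifted archimedean parameters and the triangle inequality.

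The principal obstacle is the archimedean lower bound: because the sums $\mu_\pi(j)+\mu_{\pi'}(j')$ can exhibit cancellation that makes $C_\infty(\pi\times\pi')$ much smaller than the naive upper bound $C_\infty(\pi)^{m'}C_\infty(\pi')^m$, one cannot simply deduce the lower bound from $C_\infty(\pi\times\tilde\pi)\leq C_\infty(\pi)^{2m}$ by routine algebra; in fact the weaker inequality $C_\infty(\pi)^{m'/2}C_\infty(\pi')^{m/2}\leq C_\infty(\pi\times\pi')$ is already false. The exponents $m'/(4m)$ and $m/(4m')$ are calibrated precisely so that cancellations in the sums $\mu_\pi(j)+\mu_{\pi'}(j')$ are matched by cancellations in the differences $\mu_\pi(j)+\overline{\mu_\pi(k)}$ appearing in $C_\infty(\pi\times\tilde\pi)$, and the core of the proof is a Cauchy--Schwarz-type inequality among these three multisets of complex numbers with bounded real parts.
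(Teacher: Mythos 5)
The paper's own proof is a citation: it combines \cite[Lemma A.2]{Humphries2019-2} (for the lower bound) with \cite[Lemma 2.1]{SoundararajanThorner2019} (for the upper bound), and then remarks that the exponential factors $e^{O((m'm)^2)}$ and $e^{O(m'm)}$ appearing in those references can be absorbed once the shift in the definition of the analytic conductor is changed from $+1$ to $+3$. Your proposal instead attempts a self-contained argument, which is a genuinely different route, so the comparison is worth making carefully.

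Your treatment of the upper bound is essentially sound. The non-archimedean part via Bushnell--Henniart is exactly what \cite[Lemma 2.1]{SoundararajanThorner2019} uses, and the elementary inequality $|a+b|+3\leq(|a|+3)(|b|+3)$ is a clean way to see why switching from $+1$ to $+3$ removes the $e^{O(m'm)}$ slack (with $+1$ in place of $+3$ the subadditivity step costs a bounded multiplicative factor at each of the $m'm$ archimedean parameters). You should, however, be more careful about the claim that $|\mu_{\pi\times\pi'}(j,j')|\leq|\mu_\pi(j)|+|\mu_{\pi'}(j')|$ in the archimedean-ramified case: the parameters produced by the M\"uller--Speh description involve integer shifts and a rearrangement of indices, so the inequality holds only after an appropriate matching; this is not automatic from the triangle inequality alone.

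The genuine gap is the lower bound, and it is precisely the part you yourself label as the ``principal obstacle.'' You correctly diagnose that the exponents $m'/(4m)$ and $m/(4m')$ must account for cancellation in the sums $\mu_\pi(j)+\mu_{\pi'}(j')$, and you correctly observe that a naive comparison against $C_\infty(\pi)^{m'/2}C_\infty(\pi')^{m/2}$ fails. But the ``Cauchy--Schwarz-type inequality among these three multisets'' that you say is the core of the argument is never stated, let alone proved; the same is true of the claimed Cauchy--Schwarz comparison for local conductor exponents at ramified finite places. As written, you have identified where the difficulty lies and named a plausible mechanism, but you have not produced the inequality that would close the proof. Until that inequality is written down and verified (and checked to degrade only by a factor absorbable into the $+3$ shift rather than into an $e^{O((m'm)^2)}$ constant), the lower bound is not established, which is why the paper falls back on citing \cite[Lemma A.2]{Humphries2019-2} rather than reproducing the argument.
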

\begin{proof}
	By combining \cite[Lemma A.2]{Humphries2019-2} and \cite[Lemma 2.1]{SoundararajanThorner2019}, we see that
\[
C(\pi\times\tilde{\pi})^{(m')^2}C(\pi'\times\tilde{\pi}')^{m^2}\leq e^{O((m'm)^2)} C(\pi\times\pi')^{4m'm},\qquad C(\pi\times\pi')\leq e^{O(m'm)}C(\pi)^{m'}C(\pi')^{m}.
\]
In both of those proofs, the analytic conductor is defined to be
\[
C(\pi)= q_{\pi}\prod_{j=1}^{m}(|\mu_{\pi}(j)|+1),\qquad C(\pi\times\pi')= q_{\pi\times\pi'}\prod_{j=1}^{m}\prod_{j'=1}^{m'}(|\mu_{\pi\times\pi'}(j,j')|+1).
\]
A careful inspection of the proofs shows that we can remove the factors $e^{O((m'm)^2)}$ and $e^{O(m'm)}$ when the shift of $+1$ is increased to $+3$, as in \eqref{eqn:analytic_cond}.  Otherwise, the details are the same.
\end{proof}

The following result is \cite[Proposition 2.5]{SoundararajanThorner2019}.

\begin{proposition}
\label{prop:class0}
	If $\pi\in\mathfrak{F}_m$ and $\pi'\in\mathfrak{F}_{m'}$, then $L(s,\pi)\in\mathcal{S}(m)$ and $L(s,\pi\times\pi')\in\mathcal{S}(m'm)$.
\end{proposition}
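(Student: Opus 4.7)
The plan is to verify the four defining axioms (A)--(D) of $\mathcal{S}(m)$ for $L(s,\pi)$ and of $\mathcal{S}(m'm)$ for $L(s,\pi\times\pi')$, drawing on the standard analytic theory of $\GL_m$ and Rankin--Selberg $L$-functions, and crucially invoking \cref{lem:AC} to keep the constants in (D) polynomial in the degrees.

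Axioms (A) and (B) are read off the set-up in \cref{sec:Applications}: for $L(s,\pi)$, they follow from Godement--Jacquet; for $L(s,\pi\times\pi')$, the functional equation and order-one entireness of $\Lambda(s,\pi\times\pi')$ (away from the possible simple pole when $\pi'=\tilde\pi$) come from the analytic theory of Jacquet--Piatetski-Shapiro--Shalika, Shahidi, and M{\oe}glin--Waldspurger, together with the conductor divisibility $q_{\pi\times\pi'}\mid q_\pi^{m'}q_{\pi'}^m$ of Bushnell--Henniart. For axiom (C), the Luo--Rudnick--Sarnak bounds yield $|\alpha_{j,\pi}(p)|\leq p^{1/2-1/(m^2+1)}$ and $\re\mu_\pi(j)\geq -1/2+1/(m^2+1)$, both strictly stronger than required. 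At unramified primes of $L(s,\pi\times\pi')$ one has $\alpha_{j,j',\pi\times\pi'}(p)=\alpha_{j,\pi}(p)\alpha_{j',\pi'}(p)$, and then AM--GM gives $1/(m^2+1)+1/((m')^2+1)\geq 1/(mm')$; at ramified primes, the explicit description of the local parameters in the appendix of \cite{SoundararajanThorner2019} supplies the corresponding bound.

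The real work is in (D). The first inequality will follow from a Cauchy--Schwarz step: since $|a_\pi(p^k)|^2=a_{\pi\times\tilde\pi}(p^k)$ at unramified primes (with a controllable local correction at ramified primes), one has
\[
\sum_{n=1}^{\infty}\frac{|a_\pi(n)|\Lambda(n)}{n^{1+\eta}}\leq \Big(\sum_{n=1}^{\infty}\frac{a_{\pi\times\tilde\pi}(n)\Lambda(n)}{n^{1+\eta}}\Big)^{1/2}\Big(\sum_{n=1}^{\infty}\frac{\Lambda(n)}{n^{1+\eta}}\Big)^{1/2}.
\]
The first factor on the right equals $-L'(1+\eta,\pi\times\tilde\pi)/L(1+\eta,\pi\times\tilde\pi)$, whose simple pole of residue $r_{\pi\times\tilde\pi}=1$ at $s=1$ contributes $1/\eta+O(\log C(\pi\times\tilde\pi))=1/\eta+O(m\log C(\pi)+m^2)$ via \cref{lem:AC}; the second factor is $-\zeta'/\zeta(1+\eta)=1/\eta+O(1)$. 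Combining via $\sqrt{xy}\leq(x+y)/2$ yields the stated bound. The same argument applied to $L(s,(\pi\times\pi')\times\widetilde{\pi\times\pi'})$ handles the Rankin--Selberg case.

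The short-interval bound \eqref{eqn:new_selberg_sieve_ST} is the genuine obstacle. The strategy is a Brun--Selberg sieve argument: construct a Selberg-type majorant for the indicator of $(x,xe^{1/T}]$, apply Cauchy--Schwarz to reduce the weighted sum of $|a_\pi(n)|\Lambda(n)$ to a weighted sum of $a_{\pi\times\tilde\pi}(n)\Lambda(n)$, and evaluate the latter by a Perron-type contour shift past $\re(s)=1$ using the convexity bound for $L(s,\pi\times\tilde\pi)$, Stirling asymptotics for the archimedean gamma factor, and a standard zero-counting estimate. The \emph{hard part} is tracking explicit polynomial-in-$m$ dependence at every stage: \cref{lem:AC} converts $\log C(\pi\times\tilde\pi)$ into a linear polynomial in $\log C(\pi)$ with coefficient $O(m)$; the gamma factors contribute a factor $m^{O(m)}$ via Stirling; and the Selberg weights multiply in an $m^{O(m^3)}$ factor. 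A careful accounting of these contributions produces the admissibility threshold $x\geq \Cr{xrange} m^{182m^4}(C(\pi)T)^{144m^3}$, refining the unspecified $c(m),c'(m)$ of \eqref{eqn:old_selberg_sieve_ST}. The analogous bookkeeping for $L(s,\pi\times\pi')$ uses \cref{lem:AC} to replace $C((\pi\times\pi')\times\widetilde{\pi\times\pi'})$ by a polynomial in $C(\pi)$ and $C(\pi')$ of degree $O(mm')$, at the cost of replacing $m$ by $mm'$ throughout.
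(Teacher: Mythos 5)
The paper's own ``proof'' of \cref{prop:class0} is a single citation to \cite[Proposition 2.5]{SoundararajanThorner2019}, so your proposal is really a reconstruction of what that cited result establishes. At the level of (A)--(C) and the passage from the $\ell^1$ bounds in (D) to the Rankin--Selberg $L$-function $L(s,\pi\times\tilde{\pi})$, your outline matches the standard argument. The local-parameter inequality you need in (C) does check out: $\frac{1}{m^2+1}+\frac{1}{(m')^2+1}\geq\frac{1}{mm'}$ holds for all $m,m'\geq 1$ (one can deduce it from AM--GM together with $3m^2(m')^2\geq m^2+(m')^2+1$), though it is a short calculation rather than an immediate consequence of AM--GM.

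Three points in your plan need repair or caution. First, the Cauchy--Schwarz step for the first estimate in (D) requires $|a_\pi(n)|^2\leq a_{\pi\times\tilde{\pi}}(n)$ at \emph{all} prime powers; the identity $|a_\pi(p^k)|^2=a_{\pi\times\tilde{\pi}}(p^k)$ holds only when $p\nmid q_\pi$, and the ``controllable local correction'' at ramified $p$ is exactly what the appendix of \cite{SoundararajanThorner2019} is for --- it is the substance of the step, not a footnote. Second, your short-interval argument for \eqref{eqn:new_selberg_sieve_ST} invokes ``the convexity bound for $L(s,\pi\times\tilde{\pi})$'' after a Perron-type contour shift, but the argument actually used in \cite[Section 6]{SoundararajanThorner2019} (which the proof of \cref{prop:class} in this paper explicitly follows) employs no $L$-function convexity bound at all: it is a sieve mean-value estimate whose contour shift is governed by the Mellin-transform decay of compactly supported bump functions $\Phi,\Phi_1$, and the admissible threshold $x\geq \Cr{xrange}m^{182m^4}(C(\pi)T)^{144m^3}$ comes from choosing those bump functions carefully and quoting \cite[Lemma 9]{BFI2}. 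A convexity-based route would have to be re-engineered to match those exponents. Third, a structural remark: the refined constants $c(m)\ll m$ and $c'(m)=\Cr{xrange}m^{182m^4}$ in \eqref{eqn:new_selberg_sieve_ST} are \emph{not} supplied by \cite[Proposition 2.5]{SoundararajanThorner2019}, which yields only the weaker uniformity \eqref{eqn:old_selberg_sieve_ST}; those constants are obtained in the proof of \cref{prop:class}. Your plan is therefore carrying the load of both propositions at once, which is not a flaw in your reasoning --- if anything, it exposes a small tension between the paper's definition of $\mathcal{S}(m)$ (which bakes in the refined (D)) and the bare citation it gives for \cref{prop:class0} --- but it means the derivation you sketch is substantially heavier than the one-line proof the paper gives for this particular statement.
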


We refine \cref{prop:class0} as follows.

\begin{proposition}
	\label{prop:class}
	If $\pi\in\mathfrak{F}_m$ and $\pi'\in\mathfrak{F}_{m'}$, then $L(s,\pi)\in\mathfrak{S}(m)$ and $L(s,\pi\times\pi')\in\mathfrak{S}(m'm)$.
\end{proposition}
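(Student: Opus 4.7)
Since \cref{prop:class0} already places $L(s,\pi)$ in $\mathcal{S}(m)$ and $L(s,\pi\times\pi')$ in $\mathcal{S}(m'm)$, properties (A)--(D) are in hand for both $L$-functions. My plan is therefore to take these as given and concentrate exclusively on verifying the additional properties (E) and (F), treating the standard and Rankin--Selberg cases in parallel so that the Rankin--Selberg argument is only a notational upgrade of the standard one (with $m$ replaced by $m'm$, $C(\pi)$ by $C(\pi\times\pi')$, and the Rankin--Selberg square $L(s,\pi\times\tilde{\pi})$ replaced by $L(s,(\pi\times\pi')\times (\widetilde{\pi\times\pi'}))$).

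For (E), nonvanishing on the line $\Re(s)=1$ is classical: for $L(s,\pi)$ it is the theorem of Jacquet--Shalika, and for $L(s,\pi\times\pi')$ it is a consequence of Shahidi's nonvanishing result combined with the Jacquet--Piatetski-Shapiro--Shalika theory. To extract a quantitative function $\delta_{\pi}$ admitting at most one exceptional real zero, I would apply the Hadamard--de la Vall{\'e}e Poussin argument to the auxiliary $L$-function whose Dirichlet coefficients are manifestly nonnegative (namely $L(s,\pi\times\tilde{\pi})$, respectively $L(s,(\pi\times\pi')\times(\widetilde{\pi\times\pi'}))$). Combined with the pointwise and $\ell^1$ control provided by (C) and (D) and with the bound on $C(\pi\times\tilde{\pi})$ from \cref{lem:AC}, this yields a $\delta_{\pi}(T)$ of the standard shape $\delta_{\pi}(T)\gg 1/\bigl(m^4\log(C(\pi)T)\bigr)$ (and analogously with $m'm$ for the Rankin--Selberg case). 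The details will be carried out in \cref{sec:ZFR}.

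For (F), the verification splits into three sub-tasks. Simplicity of $\beta_0$ when $\beta_0>\tfrac12$ follows by a standard positivity argument: a double real zero at $\beta_0$ contradicts the lower bound on $-(L'/L)(\beta_0,\pi\times\tilde{\pi})$ that positivity of Dirichlet coefficients forces, once (D) is used to control the Dirichlet series at $\Re(s)$ slightly exceeding $1$. The effective Siegel-type bound $\beta_0\leq 1-C(\pi)^{-\Cr{Siegel_effective} m}$ is the quantitative form recorded for $\mathrm{GL}_m$ $L$-functions by Brumley, with the Rankin--Selberg analogue reducing to the standard case via the factorization trick $L(s,(\pi\times\pi')\times(\widetilde{\pi\times\pi'}))$ and the conductor estimates of \cref{lem:AC}. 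Finally, the zero-repulsion inequality (F)(iii) is extracted from a Tur{\'a}n power-sum or mollifier argument driven by the log-free zero density estimate that the paper proves in \cref{sec:zeros} from (A)--(D); the detection of $\beta_0$ produces the $\log\bigl(\Cr{Siegel_2}/((1-\beta_0)m\log(C(\pi)(|\gamma|+3)^m))\bigr)$ factor.

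The main obstacle will be (F)(iii): the other items are essentially citations or short positivity computations, but the Deuring--Heilbronn inequality requires tracking explicit constants through the explicit formula, combining the log-free zero density estimate with a positivity-based detector for $\beta_0$, and maintaining uniformity in both the analytic conductor and the degree. In particular, care is needed to ensure that the constants remain independent of $\pi$ and depend on $m$ only in the controlled way stated, so that the Rankin--Selberg statement follows from the standard one simply by substituting $m\mapsto m'm$ and $C(\pi)\mapsto C(\pi\times\pi')$.
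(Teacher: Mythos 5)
Your plan has a genuine gap at the very first step. You claim that because \cref{prop:class0} already places $L(s,\pi)$ in $\mathcal{S}(m)$ and $L(s,\pi\times\pi')$ in $\mathcal{S}(m'm)$, ``properties (A)--(D) are in hand.'' That is not so: the paper's definition of $\mathfrak{S}(m)$ uses a stronger form of (D) than the class $\mathcal{S}(m)$ does. In $\mathcal{S}(m)$ one only knows \eqref{eqn:old_selberg_sieve_ST} with unspecified constants $c(m)$, $c'(m)$, whereas $\mathfrak{S}(m)$ demands the explicit dependences $c(m)\ll m$ and $c'(m)=\Cr{xrange}m^{182m^4}$ in \eqref{eqn:new_selberg_sieve_ST}; this distinction is flagged in the remark immediately after (D) and is essential for the explicit $m$-dependence throughout the paper. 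The paper's proof therefore cannot simply cite \cref{prop:class0} for (D); it re-runs the Brun--Titchmarsh/Selberg-sieve argument of \cite[Section 6]{SoundararajanThorner2019} with concretely chosen Beurling-type test functions $\Phi$, $\Phi_1$ (compactly supported bumps whose Mellin transforms are controlled via \cite[Lemma 9]{BFI2}), and it is exactly this re-run that produces the stated powers of $m$. Without that step your proof does not place either $L$-function in $\mathfrak{S}$.

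A secondary issue: for (E) you assert that a Hadamard--de la Vall\'ee Poussin argument on the Rankin--Selberg square yields a standard-shape zero-free region $\delta_{\pi}(T)\gg 1/(m^4\log(C(\pi)T))$ ``analogously with $m'm$'' in the Rankin--Selberg case. This is not available in the generality claimed. When $\pi'\neq\tilde{\pi}$ and neither $\pi$ nor $\pi'$ is self-dual, one does not know any standard zero-free region for $L(s,\pi\times\pi')$; the paper instead invokes Brumley's narrow zero-free region (\cref{prop:ZFR_Brumley}), whose width decays like a power of $C(\pi)C(\pi')(3+|t|)$ rather than like $1/\log(\cdot)$. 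Property (E) only asks that \emph{some} $\delta_{\pi}$ exist, so this does not sink (E), but it means your proposed uniform derivation of (E) is not valid and the case split (\cref{prop:ZFR_SD} versus \cref{prop:ZFR_Brumley}) is unavoidable. Also, when the standard zero-free region is available, the paper obtains the factor $(m+m')$, not $(m'm)^4$, in the denominator. Your discussion of (F) is closer to the mark---the paper does apply a Deuring--Heilbronn/positivity argument to a product involving $L(s,\pi\times\tilde{\pi})L(s,\pi'\times\tilde{\pi}')L(s,\pi\times\pi')L(s,\tilde{\pi}\times\tilde{\pi}')$ and controls conductors via \cref{lem:AC}---but this is only asserted correctly in outline, and the log-free zero density estimate is not where (F)(iii) comes from in the paper's treatment.
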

\begin{proof}
	First, we confirm that $L(s,\pi\times\pi')\in\mathfrak{S}(m'm)$.  Properties (A), (B), and (C) are true because $L(s,\pi\times\pi')\in\mathcal{S}(m'm)$, as proved in \cite{SoundararajanThorner2019}.  The first estimate in (D) is proved in \cite[pp. 1241-1242]{SoundararajanThorner2019}.  The second estimate in (D) is proved by proceeding as in \cite[Section 6]{SoundararajanThorner2019}, but with certain specific choices of test functions $\Phi$ and $\Phi_1$.  If $\mathbf{1}_{(a,b)}(t)$ is the indicator function of the open interval $(a,b)$ and one chooses
\[
\Phi(t)=\exp\Big(\frac{4}{3}+\frac{1}{(t-\frac{1}{2})^2-1}\Big)\mathbf{1}_{(-\frac{1}{2},\frac{3}{2})}(t),\quad \Phi_1(t)=\exp\Big(1+\frac{1}{(2t-1)^2-1}\Big)\mathbf{1}_{(0,1)}(t)
\]
in the proof of \cite[Theorem 2.4]{SoundararajanThorner2019}, then bounds for the Mellin transforms of $\Phi$ and $\Phi_1$ that follow from \cite[Lemma 9]{BFI2} permit us to take
\[
c(m)\ll m,\qquad c'(m)=\Cr{xrange}m^{182 m^4}
\]
in \eqref{eqn:old_selberg_sieve_ST}.  Property (F) and a strong form of property (E) are given in Propositions \ref{prop:ZFR_SD}, \ref{prop:ZFR_Brumley}, and \ref{prop:zero_repulsion} below.  We conclude that $L(s,\pi\times\pi')\in\mathfrak{S}(m'm)$.  If $\pi'=\mathbbm{1}$, then $L(s,\pi)=L(s,\pi\times\pi')\in\mathfrak{S}(m)$.
\end{proof}

Once we incorporate the best known zero-free regions for $L(s,\pi)$ and $L(s,\pi\times\pi')$, we arrive at the most uniform versions of the prime number theorem for $L(s,\pi)$ and $L(s,\pi\times\pi')$ up to now.  First, we apply \cref{thm:main} to the standard $L$-function $L(s,\pi)$.

\begin{theorem}
	\label{thm:PNT1}
	Let $\pi\in\mathfrak{F}_m-\{\mathbbm{1}\}$.  Let
	\[
	\beta_1=\max(\{\beta>\tfrac{3}{4}\colon L(\beta,\pi)=0\}\cup\{\tfrac{1}{2}\}).
	\]
	There exist constants $\Cr{main1}$ and $\Cl[abcon]{errorterm_1}$ such that if $x\geq C(\pi)^{4\Cr{main1}m^{8}}$, then
	\[
	\sum_{n\leq x}a_{\pi}(n) \Lambda(n) = -\frac{x^{\beta_1}}{\beta_1}+O\Big(\Big(x-\frac{x^{\beta_{1}}}{\beta_{1}}\Big)\exp\Big(-\Cr{errorterm_1}\frac{\log x}{m\log C(\pi)+\sqrt{m\log x}}\Big)\Big).
	\]
\end{theorem}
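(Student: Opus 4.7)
The plan is to apply \cref{thm:main} to $L(s,\pi)$, which lies in $\mathfrak{S}(m)$ by \cref{prop:class}, and has $r_{\pi}=0$ since $\pi\neq\mathbbm{1}$ forces $L(s,\pi)$ to be entire. Two things must be supplied beyond a direct invocation: a usable $\delta_{\pi}$ in property (E), and a reconciliation of $\beta_0$ (from property (F)) with $\beta_1$.

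First I would import the standard Hadamard--de la Vall\'ee Poussin zero-free region to be proved in \cref{sec:ZFR}, giving a $\delta_\pi$ satisfying (E) and (F) of shape $\delta_\pi(T)\gg 1/(m\log(C(\pi)T))$. With this choice I would compute $\eta_\pi(x)=\inf_{t\geq 3}(\delta_\pi(t)\log x+\log t)$ via the substitution $v=\log(C(\pi)t)$, which reduces the problem to minimising $c\log x/(mv)+v-\log C(\pi)$ on $v\geq\log(3C(\pi))$; the interior optimum $v=\sqrt{c\log x/m}$ together with the boundary case yields
\[
\eta_\pi(x)\gg \frac{\log x}{m\log C(\pi)+\sqrt{m\log x}}.
\]

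Next I would apply \cref{thm:main} with the choice $A=2m^{3/2}$, so that the hypothesis $x\geq C(\pi)^{\Cr{main1}A^2m^5}$ becomes $x\geq C(\pi)^{4\Cr{main1}m^8}$ exactly as claimed. Then $(1-1/A)\geq 1/2$, the prefactor satisfies $\log(m^{\Cr{main3}m^3}A^2)=O(m^3\log m)$, and under the polynomial hypothesis this logarithm is dominated by $\eta_\pi(x)/2$; the polynomial-saving term $m^5x^{-\Cr{main2}/m^4}$ is likewise dominated by $\exp(-\eta_\pi(x)/2)$. The two error contributions in \cref{thm:main} thus collapse to the desired form $\exp(-\Cr{errorterm_1}\log x/(m\log C(\pi)+\sqrt{m\log x}))$.

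It remains to identify $\beta_0$ with $\beta_1$. If $\delta_\pi(3)\geq 1/4$, this is immediate from the definitions, since then $\max(3/4,1-\delta_\pi(3))=3/4$. Otherwise any discrepancy arises only when $\beta_1\in(3/4,1-\delta_\pi(3)]$, in which case $x^{\beta_1}/\beta_1\ll x\exp(-c\log x/(m\log C(\pi)))$ can itself be absorbed into the stated error term, since $1/(m\log C(\pi))\geq 1/(m\log C(\pi)+\sqrt{m\log x})$. The principal obstacle I anticipate is exactly this bookkeeping: verifying uniformly in $m$ that $\eta_\pi(x)$ swallows the $m^{\Cr{main3}m^3}$ prefactor, the polynomial-saving term, and the $\beta_0$--$\beta_1$ discrepancy, all under the single polynomial hypothesis $x\geq C(\pi)^{4\Cr{main1}m^8}$.
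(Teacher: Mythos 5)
Your proof is essentially correct and reaches the stated conclusion, but it takes a genuinely different route from the paper. The paper derives \cref{thm:PNT1} as a corollary of \cref{thm:PNT2} by specializing $\pi'=\mathbbm{1}$ and using \cref{thm:main} with the fixed choice $A=2$, relying on the already-computed shape of $\eta_{\pi\times\pi'}(x)$ in \cref{Humphries-Thorner} and the Rankin--Selberg conductor comparison in \cref{lem:AC}; the restriction to the polynomial range $x\geq(C(\pi)C(\pi'))^{4\Cr{main1}(m'm)^8}$ is what soaks up the $(m'm)^{\Cr{main3}(m'm)^3}$ prefactor. You instead apply \cref{thm:main} directly to $L(s,\pi)\in\mathfrak{S}(m)$ with a degree-dependent auxiliary parameter $A=2m^{3/2}$, which produces the exponent $\Cr{main1}A^2m^5=4\Cr{main1}m^8$ without the detour through $\mathfrak{S}(m'm)$, and then handle the $m^{\Cr{main3}m^3}$ and polynomial-saving terms by hand at the boundary of the range using $\log C(\pi)\geq m\log 3$. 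That is a clean and slightly more direct argument, and it buys a conceptually simpler derivation of the range exponent $4\Cr{main1}m^8$; the paper's route buys a single lemma (\cref{Humphries-Thorner}) that serves both $L(s,\pi)$ and the full Rankin--Selberg case at once. Your reconciliation of $\beta_0$ with $\beta_1$ (absorbing $x^{\beta_1}/\beta_1 \ll x\exp(-\delta_\pi(3)\log x)$ into the error when $\beta_1\leq 1-\delta_\pi(3)$, and observing $\beta_0=\beta_1$ otherwise) is correct and is indeed a bookkeeping step the paper handles implicitly inside the definition of $\beta_0$ and the statement of \cref{thm:main}.

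One imprecision worth flagging: the zero-free region coming from \cref{prop:ZFR_SD} (applied with $\pi'=\mathbbm{1}$) has the shape $\delta_{\pi}(T)\gg 1/\big((m+1)\log(C(\pi)T^{m})\big)$, i.e.\ the vertical parameter $T$ appears with exponent $m$, not $1$ as you wrote. This enlarges the coefficient of $\log t$ in the optimization to $m(m+1)$, and the interior optimum then gives $\eta_\pi(x)\gg \sqrt{\log x}/m$ rather than $\sqrt{\log x / m}$. The paper's own \cref{Humphries-Thorner} records exactly this, with the $\sqrt{m(m+m')\log x}$ term. This does not affect the structure of your argument at all, but you should carry the $T^m$ through correctly so that the final $m$-dependence in the error term matches what the optimization actually produces.
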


For $L(s,\pi\times\pi')$, we consider two separate cases.

\begin{theorem}
	\label{thm:PNT2}
	Let $\pi\in\mathfrak{F}_m$ and $\pi'\in\mathfrak{F}_{m'}$.  Let
	\[
	\beta_1=\max(\{\beta>\tfrac{3}{4}\colon L(\beta,\pi\times\pi')=0\}\cup\{\tfrac{1}{2}\}).
	\]
	There exist constants $\Cr{main1}$ and $\Cr{errorterm_1}$ such that if
\begin{equation}
\label{eqn:special}
\pi'\in\{\tilde{\pi},\tilde{\pi}'\}
\end{equation}
and $x\geq (C(\pi)C(\pi'))^{4\Cr{main1}(m'm)^{8}}$, then
	\begin{multline*}
	\sum_{n\leq x}a_{\pi\times\pi'}(n) \Lambda(n)\\
	= r_{\pi\times\pi'}x-\frac{x^{\beta_1}}{\beta_1}+ O\Big(\Big(x-\frac{x^{\beta_1}}{\beta_1}\Big)\exp\Big(-\Cr{errorterm_1}\frac{\log x}{(m+m')\log(C(\pi)C(\pi'))+\sqrt{m(m+m')\log x}}\Big)\Big).
	\end{multline*}
\end{theorem}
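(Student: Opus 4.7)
The plan is to apply Theorem~\ref{thm:main} to $L(s,\pi\times\pi')\in\mathfrak{S}(mm')$ (by Proposition~\ref{prop:class}), after extracting a sharp version of property~(E) from the self-dual hypothesis \eqref{eqn:special} and optimizing $\eta_{\pi\times\pi'}$ defined in \eqref{eqn:eta_pi_def}.

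Under \eqref{eqn:special}, either $\pi'=\tilde{\pi}$ (so $L(s,\pi\times\pi')=L(s,\pi\times\tilde\pi)$ has a simple pole at $s=1$) or $\pi'=\tilde{\pi}'$ (so $\pi'$ is self-dual). In either case, the self-duality enables a classical Hadamard--de la Vall\'ee Poussin argument; its output, formalized below in Propositions~\ref{prop:ZFR_SD}--\ref{prop:zero_repulsion}, is a function $\delta_{\pi\times\pi'}$ of the standard shape
\[
\delta_{\pi\times\pi'}(t)\;\gg\;\frac{1}{\log\!\bigl(C(\pi\times\pi')(|t|+3)^{mm'}\bigr)},
\]
with any exceptional real zero recorded as $\beta_0=\beta_1$ through property~(F). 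Lemma~\ref{lem:AC} then substitutes $\log C(\pi\times\pi')\leq(m+m')\log(C(\pi)C(\pi'))$.

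Next, I would compute $\eta_{\pi\times\pi'}$ by elementary calculus. Writing $L=\log(C(\pi)C(\pi'))$, $X=\log x$, and $u=\log t\geq\log 3$, the quantity $\delta_{\pi\times\pi'}(t)X+u$ is bounded below (up to absolute constants) by $X/((m+m')L+mm'u)+u$. Its interior critical point lies at $u^\ast=(\sqrt{mm'X}-(m+m')L)/(mm')$ with value $\asymp\sqrt{X/(mm')}$, and the boundary $u=\log 3$ contributes $\asymp X/((m+m')L)$ when $(m+m')L$ dominates. Combining the two regimes, together with $mm'\leq m(m+m')$, yields
\[
\eta_{\pi\times\pi'}(x)\;\gg\;\frac{\log x}{(m+m')\log(C(\pi)C(\pi'))+\sqrt{m(m+m')\log x}}.
\]

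Finally, I would apply Theorem~\ref{thm:main} with $A=2$. By Lemma~\ref{lem:AC}, $C(\pi\times\pi')\leq(C(\pi)C(\pi'))^{m+m'}\leq(C(\pi)C(\pi'))^{2mm'}$, so the hypothesis $x\geq C(\pi\times\pi')^{\Cr{main1}A^2(mm')^5}$ of Theorem~\ref{thm:main} follows (up to an innocuous adjustment of $\Cr{main1}$) from $x\geq(C(\pi)C(\pi'))^{4\Cr{main1}(mm')^8}$. In this range, the error $(mm')^5 x^{-\Cr{main2}/(mm')^4}$ and the prefactor $(mm')^{\Cr{main3}(mm')^3}A^2$ in Theorem~\ref{thm:main} are both absorbed into $e^{-\Cr{errorterm_1}\eta_{\pi\times\pi'}(x)}$ by shrinking the exponent constant from $(1-1/A)$ to the final $\Cr{errorterm_1}$. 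The main obstacle is the first step: securing the zero-free region with sharp joint dependence on $m,m'$ in the self-dual case is the technical crux, handled by the separately proven Propositions~\ref{prop:ZFR_SD}--\ref{prop:zero_repulsion}; the remaining optimization and prefactor absorption are routine given that input.
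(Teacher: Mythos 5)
Your proposal is correct and follows essentially the same route as the paper: apply \cref{thm:main} with $A=2$ to $L(s,\pi\times\pi')\in\mathfrak{S}(m'm)$ (via \cref{prop:class}), feed in the zero-free region of \cref{prop:ZFR_SD}, and optimize $\eta_{\pi\times\pi'}(x)$ by elementary calculus exactly as in the paper's \cref{Humphries-Thorner} (interior critical point versus boundary $t=3$, then $\exp(-\min\{a,b\})\leq\exp(-ab/(a+b))$), with the extra margin in the exponent $(m'm)^8$ absorbing the $(m'm)^{\Cr{main3}(m'm)^3}$ prefactor. One small inaccuracy worth flagging: you state the zero-free region in the form $\delta_{\pi\times\pi'}(t)\gg 1/\log(C(\pi\times\pi')(|t|+3)^{m'm})$, but what \cref{prop:ZFR_SD} actually gives has $(m+m')$ as an overall prefactor and $(|\im(s)|+3)^m$ inside the logarithm, i.e.\ $\delta\geq\Cr{ZFR}/((m+m')\log(C(\pi)C(\pi'))+m(m+m')\log(|t|+3))$; your stated form is slightly stronger in the $t$-aspect than what is proved. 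Since you then immediately relax to the final shape via $m'm\leq m(m+m')$, and the paper's form is obtained from \cref{prop:ZFR_SD} directly without either \cref{lem:AC} or that relaxation, the discrepancy does not affect the outcome.
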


All preceding prime number theorems for $L(s,\pi\times\pi')$ with a nontrivial error term and a range of $x$ with specified effective dependence on $C(\pi)$ and $C(\pi')$ have required an assumption of a ``standard'' zero-free region for $L(s,\pi\times\pi')$, which is known when \eqref{eqn:special} is true (\cref{Humphries-Thorner} below).  When \eqref{eqn:special} is true, Theorems \ref{thm:PNT1} and \ref{thm:PNT2} produce the strongest known error terms in ranges of $x$ that are polynomial in the associated analytic conductors.  When \eqref{eqn:special} is false, we only have Brumley's narrow zero-free region (\cref{Brumley} below).  For such $\pi$ and $\pi'$, \cref{thm:main} and \cref{Brumley} together imply the first prime number theorem with a nontrivial error term of any sort, with an effective range of $x$ in terms of $C(\pi)$ and $C(\pi')$.

\begin{theorem}
\label{thm:PNT3}
Let $\pi\in\mathfrak{F}_m$, $\pi'\in\mathfrak{F}_{m'}$.  If $\pi'\neq\tilde{\pi}$, then there exists a constant $\Cr{main1}>0$ such that if
\[
x\geq \exp(\Cr{main1}(C(\pi)C(\pi'))^{2(m+m')^2}),
\]
then
\[
\sum_{n\leq x}a_{\pi\times\pi'}(n)\Lambda(n) \ll_{m,m'}x(\log x)^{-\frac{1}{m'm}}.
\]
\end{theorem}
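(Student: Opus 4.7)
The plan is to apply \cref{thm:main} directly to the Rankin--Selberg $L$-function $L(s,\pi\times\pi')$, using Brumley's zero-free region in place of the standard one. By \cref{prop:class}, $L(s,\pi\times\pi')\in\mathfrak{S}(m'm)$, so \cref{thm:main} is available with the substitution $m\mapsto m'm$. Since $\pi'\neq\tilde{\pi}$, the $L$-function $L(s,\pi\times\pi')$ is entire, and therefore $r_{\pi\times\pi'}=0$. Invoking \cref{Brumley} below supplies a function $\delta_{\pi\times\pi'}(t)\gg_{m,m'}(C(\pi\times\pi')(t+3))^{-B}$ for an appropriate exponent $B=B(m,m')$, and simultaneously rules out any Landau--Siegel-type real zero in the non-self-dual case, so $\beta_0=\tfrac{1}{2}$ in property (F). Consequently the main term $r_{\pi\times\pi'}x-x^{\beta_0}/\beta_0$ in \cref{thm:main} reduces to $-2\sqrt{x}$, which is absorbed into the target error.

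The core of the argument is the estimation of
\[
\eta_{\pi\times\pi'}(x)=\inf_{t\geq 3}\bigl(\delta_{\pi\times\pi'}(t)\log x+\log t\bigr).
\]
A routine calculus optimization, balancing the two summands at $t_{\ast}\asymp (\log x/C(\pi\times\pi')^{B})^{1/B}$, yields a lower bound of the shape
\[
\eta_{\pi\times\pi'}(x)\geq \frac{\log\log x}{B}-O_{m,m'}(\log C(\pi\times\pi')),
\]
which becomes of order $\log\log x$ precisely when $\log x$ exceeds a sufficiently large power of $C(\pi\times\pi')$. Using \cref{lem:AC} to bound $C(\pi\times\pi')\leq C(\pi)^{m'}C(\pi')^{m}$, the hypothesis $x\geq \exp(\Cr{main1}(C(\pi)C(\pi'))^{2(m+m')^{2}})$ is tailored simultaneously to push the balance point $t_{\ast}$ into the effective regime and to dominate the polynomial range requirement $x\geq C(\pi\times\pi')^{\Cr{main1}A^{2}(m'm)^{5}}$ of \cref{thm:main}.

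With a suitable choice of $A=A(m,m')$ inserted into \cref{thm:main}, the dominant error term is
\[
m^{\Cr{main3}m^{3}}A^{2}e^{-(1-\frac{1}{A})\eta_{\pi\times\pi'}(x)}\ll_{m,m'}(\log x)^{-1/(m'm)},
\]
while the subordinate contribution $m^{5}x^{-\Cr{main2}/(m'm)^{4}}$ is vastly smaller in the assumed range of $x$. Combining the two and absorbing the negligible $-2\sqrt{x}$ main term yields the stated bound. The principal obstacle is calibrating the exponent $B$ coming from \cref{Brumley} against the definition of $\eta_{\pi\times\pi'}$, so that once the free parameter $A$ is fixed, the savings $e^{-(1-1/A)\eta_{\pi\times\pi'}(x)}$ hit exactly the target power $(\log x)^{-1/(m'm)}$; the precise threshold $\exp((C(\pi)C(\pi'))^{2(m+m')^{2}})$ on $x$ is engineered so that this alignment is valid and the interior optimum $t_{\ast}\geq 3$ is genuinely attained.
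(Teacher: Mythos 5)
Your proposal is correct and takes essentially the same approach as the paper's proof: apply \cref{thm:main} to $L(s,\pi\times\pi')\in\mathfrak{S}(m'm)$, observe that Brumley's zero-free region (\cref{prop:ZFR_Brumley}) carries no exceptional-zero allowance so $\beta_0=\tfrac12$, estimate $\eta_{\pi\times\pi'}(x)$ by the same calculus optimization (the content of \cref{Brumley}), pick $A$ depending on $m,m'$, and convert conductors with \cref{lem:AC}. The only difference is that the paper fixes $A=2(m+m')$ and $\epsilon=(m+m')^{-2}$ explicitly to verify that the resulting exponent on $\log x$ can be reduced to $\tfrac{1}{m'm}$ after absorbing the residual conductor factor, whereas you leave that calibration implicit.
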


\begin{remark}
Note that if $p\nmid q_{\pi}q_{\pi'}$, then $a_{\pi\times\pi'}(p^k) = a_{\pi}(p^k)a_{\pi'}(p^k)$.  Therefore, since (C) holds for $L(s,\pi\times\pi')$ even when $p|q_{\pi}q_{\pi'}$, \cref{thm:PNT2,thm:PNT3} remain the same if we sum $a_{\pi}(n)a_{\pi'}(n)\Lambda(n)$ instead of $a_{\pi\times\pi'}(n)\Lambda(n)$.
\end{remark}

\section{zeros of $L$-functions in $\mathfrak{S}(m)$}
\label{sec:zeros}

Let $m\geq 1$ be an integer, and let $\pi\in\mathfrak{S}(m)$.  Since $\Lambda(s,\pi)$ is entire of order $1$ by (B), there exist constants $a_{\pi}, b_{\pi} \in \mathbb{C}$ such that we have the Hadamard factorisation
\begin{equation}\label{Hadamard-product}
\Lambda(s,\pi) = e^{a_{\pi}+b_{\pi} s} \prod_{\Lambda(\rho,\pi) = 0} \Big(1-\frac{s}{\rho} \Big) e^{\frac{s}{\rho}}.
\end{equation}

\begin{lemma}\label{lem:number-of-zeros}
If $L(s,\pi)\in\mathfrak{S}(m)$, $t \in \R$, and $0<\eta\leq 2$, then
\[
\#\{\rho\colon |\rho-(1+it)|\leq\eta,~L(\rho,\pi) = 0 \} \ll \eta m \log(C(\pi)(2+|t|))+m^2,
\]
where the zeros $\rho$ are counted with multiplicity.  In particular,
\[
\#\{\rho = \beta+i\gamma: 0 < \beta < 1, |\gamma-t| \leq 1, L(\rho,\pi) = 0 \} \ll m \log(C(\pi)(2+|t|)),
\]
\end{lemma}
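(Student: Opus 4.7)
The plan is to couple the Hadamard factorization~\eqref{Hadamard-product} with an upper bound on $\Re\bigl(\tfrac{\Lambda'}{\Lambda}(s,\pi)\bigr)$ at a judiciously chosen point just to the right of the critical line. Taking the logarithmic derivative of~\eqref{Hadamard-product} and invoking the standard identity $\Re(b_\pi)=-\sum_\rho \Re(1/\rho)$ (a consequence of the functional equation in (B) obtained by comparing the Hadamard factorizations of $\Lambda(s,\pi)$ and $\Lambda(1-s,\tilde{\pi})$; see \cite[\S 5.6]{IK}) yields
\[
\sum_\rho \Re\Bigl(\frac{1}{s-\rho}\Bigr) = \Re\Bigl(\frac{\Lambda'}{\Lambda}(s,\pi)\Bigr),
\]
the sum being understood in the symmetric sense. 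Since (E), applied to both $\pi$ and $\tilde{\pi}$ (noting that $\mathfrak{S}(m)$ is closed under $\pi\mapsto\tilde\pi$), places every nontrivial zero in the open critical strip $0<\Re(\rho)<1$, each term on the left is nonnegative as soon as $\Re(s)\geq 1$.

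For the first (sharper) bound I would take $s=1+2\eta+it$. Any zero $\rho$ with $|\rho-(1+it)|\leq\eta$ then satisfies $|s-\rho|\leq 3\eta$ and $\Re(s-\rho)\geq 2\eta$, so $\Re(1/(s-\rho))\geq 2/(9\eta)$. To bound the right-hand side from above, I decompose
\[
\frac{\Lambda'}{\Lambda}(s,\pi)=\frac{r_\pi}{s}+\frac{r_\pi}{s-1}+\tfrac{1}{2}\log q_\pi+\frac{L'}{L}(s,\pi_\infty)+\frac{L'}{L}(s,\pi).
\]
The polar piece $r_\pi/(s-1)$ contributes at most $m/(2\eta)$ in real part; property (D) applied with parameter $2\eta$ gives $|L'/L(s,\pi)|\leq m/(2\eta)+m\log C(\pi)+O(m^2)$. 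For the archimedean factor, property (C) guarantees $\Re((s+\mu_\pi(j))/2)\geq 1/(2m)$, so the recursion $\Gamma'/\Gamma(w)=\Gamma'/\Gamma(w+1)-1/w$ combined with Stirling for the shifted argument yields $|\Gamma'/\Gamma((s+\mu_\pi(j))/2)|\ll \log(|\mu_\pi(j)|+|t|+3)+m$; summing over $j$ and using $\prod_j(|\mu_\pi(j)|+3)\leq C(\pi)$ gives $\Re(L'/L(s,\pi_\infty))\ll m\log(C(\pi)(2+|t|))+m^2$. Putting everything together,
\[
\#\{\rho\colon|\rho-(1+it)|\leq\eta\} \;\ll\; \eta\bigl[m/\eta+m\log(C(\pi)(2+|t|))+m^2\bigr] \;\ll\; \eta m\log(C(\pi)(2+|t|))+m^2,
\]
where the constraint $\eta\leq 2$ absorbs the residual $m$ and $\eta m^2$ terms into $m^2$.

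For the second (box) estimate I would instead take $s=2+it$: any zero $\rho=\beta+i\gamma$ with $0<\beta<1$ and $|\gamma-t|\leq 1$ satisfies $1\leq|s-\rho|^2\leq 5$ and $\Re(s-\rho)\geq 1$, so $\Re(1/(s-\rho))\geq 1/5$. At $s=2+it$ the $1/\eta$ singularities disappear entirely, the digamma arguments have real part at least $1/2$, and the same decomposition gives $\Re(\Lambda'/\Lambda(2+it,\pi))\ll m\log(C(\pi)(2+|t|))+m^2$. Property (C) forces $C(\pi)\geq 2^m$, so $m^2\ll m\log C(\pi)$ and the $m^2$ term is absorbed into the stated bound.

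The main technical point is the justification of $\Re(b_\pi)=-\sum_\rho \Re(1/\rho)$ under symmetric pairing of zeros; this is routine once $\Lambda(s,\pi)$ is entire of order~$1$ and satisfies the functional equation in (B), and the argument copies verbatim from the Dirichlet $L$-function case. A secondary nuisance is the $1/(2m)$-distance of $(s+\mu_\pi(j))/2$ from the pole of $\Gamma'/\Gamma$ at $0$, which forces the $O(m^2)$ slack in the archimedean estimate; this is harmless because the target bound itself already carries an $m^2$ term.
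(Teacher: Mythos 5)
Your proof is correct and follows the classical Hadamard-factorization argument. The paper does not spell this out: it simply cites \cite[Lemma 3.1]{SoundararajanThorner2019} for $0<\eta\leq 1$ and \cite[Proposition 5.7]{IK} for $1<\eta\leq 2$, both of which are proved by exactly the route you take (the identity $\Re(\Lambda'/\Lambda)(s,\pi)=\sum_\rho\Re(1/(s-\rho))$, nonnegativity of each summand to the right of $\Re(s)=1$, the decomposition of $\Lambda'/\Lambda$ via (B), the $\ell^1$ bound from (D), and Stirling for the archimedean factor).
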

\begin{proof}
	Since $\mathfrak{S}(m)\subseteq \mathcal{S}(m)$, this follows from \cite[Lemma 3.1]{SoundararajanThorner2019} when $0<\eta\leq 1$.  Otherwise, this follows from \cite[Proposition 5.7]{IK}.
\end{proof}

\noindent
Next, we refine the $m$-dependence for the log-free zero density estimate in \cite[Theorem 1.2]{SoundararajanThorner2019}.

\begin{theorem}
\label{thm:LFZDE}
	Let $L(s,\pi)\in\mathfrak{S}(m)$ and $T\geq 1$.  For $\sigma\geq 0$, define
	\[
	N_{\pi}(\sigma,T)= \#\{\rho=\beta+i\gamma \colon L(\rho,\pi)=0,~\beta\geq \sigma,~|\gamma|\leq T\},
	\]
	where each $\rho$ is counted with multiplicity.  There exists a constant $\Cl[abcon]{LFZDE}$ such that
	\[
	N_{\pi}(\sigma,T)\ll m^{\Cr{LFZDE} m^3}(C(\pi)T)^{10^7 m^3(1-\sigma)}.
	\]
\end{theorem}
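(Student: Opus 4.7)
The plan is to run the proof of \cite[Theorem 1.2]{SoundararajanThorner2019} with meticulous bookkeeping, using the strengthened form \eqref{eqn:new_selberg_sieve_ST} of property (D) in place of the qualitative version \eqref{eqn:old_selberg_sieve_ST} used there. The structural skeleton is unchanged: one constructs a mollified Dirichlet polynomial $M(s,\pi)$, of length $N = m^{O(m^4)}(C(\pi)T)^{O(m^3)}$, which \emph{detects} each nontrivial zero $\rho=\beta+i\gamma$ of $L(s,\pi)$ with $\beta\geq\sigma$ and $|\gamma|\leq T$, in the sense that $|M(\rho,\pi)|\gg 1$. Since $\mathfrak{S}(m)\subseteq\mathcal{S}(m)$, the inequality $N_\pi(\sigma,T)\leq \sum_\rho |M(\rho,\pi)|^2$ is then bounded from above by a mean value theorem for Dirichlet polynomials, exactly as in \cite{SoundararajanThorner2019}.

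First I would reinspect the detector construction in \cite[Sections 5--6]{SoundararajanThorner2019}. The arguments showing that $|M(\rho,\pi)|\gg 1$ proceed by Mellin-inverting an appropriate integral representation of $-L'/L(s,\pi)$ against a smooth test function and truncating to a Dirichlet polynomial $\sum_{n\leq N} a_\pi(n)\Lambda(n)n^{-s}$; the short-interval Selberg-type estimate of (D) is invoked to show that the tail contribution is small. Applying the new version \eqref{eqn:new_selberg_sieve_ST} in place of \eqref{eqn:old_selberg_sieve_ST}, the threshold on $N$ beyond which the bound is valid now has the explicit shape $N\geq \Cr{xrange} m^{182m^4}(C(\pi)T)^{144m^3}$, and the leading coefficient in the estimate is $\ll m$ rather than $c(m)$. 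Both refinements affect only the prefactor of $N_\pi(\sigma,T)$, leaving the exponent $10^7 m^3(1-\sigma)$ untouched.

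Second, I would run the mean value theorem for Dirichlet polynomials (in the form of \cite[Theorem 9.1]{IK}), applied to a well-spaced subset of zeros as in the original argument, drawing on \cref{lem:number-of-zeros} to bound local densities of zeros by $O(m\log(C(\pi)(|t|+3)))$. Controlling the $\ell^2$ norm of the coefficients of $M(s,\pi)$ requires the pointwise bound (C) together with the first estimate in (D) (whose right side $m/\eta+m\log C(\pi)+O(m^2)$ is now fully explicit in $m$). Tracking the $m$-dependence through the division of the zero-set into well-spaced subsets, and through the coefficient estimates for $M$, each appearance of $c(m)$ in the S--T argument is replaced by a specific $m^{O(m^k)}$ with $k\leq 4$.

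Collecting these refinements yields
\[
N_\pi(\sigma,T)\ll m^{\Cr{LFZDE} m^3}(C(\pi)T)^{10^7 m^3(1-\sigma)},
\]
with $\Cr{LFZDE}$ absolute. I expect the main obstacle to be arithmetic rather than conceptual: one must verify that a careful optimization of the free parameters (mollifier length, smoothing window in (D), and the spacing of the zero-subset chosen for the mean value theorem) keeps the aggregate $m$-dependence of the prefactor at $m^{O(m^3)}$ rather than inflating to $m^{O(m^4)}$. The specific shape of the new constants in \eqref{eqn:new_selberg_sieve_ST}---in particular the contrast between the exponents $182m^4$ in the range of $x$ and $144m^3$ in $(C(\pi)T)$, and the factor of $m$ rather than $c(m)$ in the bound---is precisely what makes this optimization go through.
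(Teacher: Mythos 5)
Your proposal is essentially the paper's approach: run the proof of \cite[Theorem 1.2]{SoundararajanThorner2019} with the strengthened form \eqref{eqn:new_selberg_sieve_ST} of property (D) and track the $m$-dependence explicitly, treating the remaining work as parameter optimization. The paper names two further concrete adjustments you leave implicit under ``careful optimization of the free parameters'': widening the admissible range of the parameter $\eta$ to $\frac{1}{200\log(C(\pi)T)}<\eta\leq\frac{1}{200m}$ so that the interval remains nonempty when $C(\pi)T<e^{200m}$, and choosing $K=10^5 m^3\eta\log(C(\pi)T)+300m^3\log(em)+\Cr{Kconst}m^2$ so that the threshold in \eqref{eqn:new_selberg_sieve_ST} is compatible with the range of integration in the zero-detector; both are exactly the kind of $m$-uniformity fixes you anticipated would be needed.
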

\begin{proof}
The proof proceeds as in \cite[Section 4]{SoundararajanThorner2019} with three small modifications.  First, we use the bound \eqref{eqn:new_selberg_sieve_ST} instead of the bound \eqref{eqn:old_selberg_sieve_ST} (cf. \cite[(1.10)]{SoundararajanThorner2019}).  This helps us to explicate the suppressed $m$-dependence in the implied constant in the third-to-last equation on \cite[p. 1252]{SoundararajanThorner2019}.  Second, we require that $\eta$ in \cite[Proof of Theorem 1.2]{SoundararajanThorner2019} satisfy
\[
\frac{1}{200\log(C(\pi)T)}<\eta\leq \frac{1}{200m}\quad\textup{instead of}\quad\frac{1}{\log(C(\pi)T)}<\eta\leq\frac{1}{200m}.
\]
When $T=1$, this ensures that the interval containing $\eta$ is always nonempty, even if $C(\pi)<e^{200m}$.  (Since $m$ was implicitly assumed to be fixed in \cite{SoundararajanThorner2019}, such considerations were inconsequential.)  Third, one chooses
\[
K = 10^5 m^3\eta \log(C(\pi)T)+300m^3\log(em)+\Cl[abcon]{Kconst}m^2
\]
in \cite[(4.4)]{SoundararajanThorner2019}, where $\Cr{Kconst}$ is suitably large.  This ensures that the range of $x$ in \eqref{eqn:new_selberg_sieve_ST} is compatible with the range of integration in the $x$-integral two equations below \cite[(4.6)]{SoundararajanThorner2019}, even when $m$ is not fixed.  These modifications allow us to determine the dependence of the implied constant in \cite[Theorem 1.2]{SoundararajanThorner2019} on $m$.
\end{proof}

We use (F) to refine \cref{thm:LFZDE}.

\begin{corollary}
\label{cor:LFZDE_repulsion}
	Let $L(s,\pi)\in\mathfrak{S}(m)$.  For $\sigma\geq 0$ and $T\geq 1$, define
	\[
	N_{\pi}^*(\sigma,T)= \begin{cases}
\#\{\rho=\beta+i\gamma \neq \beta_0 \colon L(\rho,\pi)=0,~\beta\geq \sigma,~|\gamma|\leq T\}&\mbox{if $\beta_0>\frac{1}{2}$,}\\
N_{\pi}(\sigma,T)&\mbox{otherwise,}
\end{cases}
	\]
	where each $\rho$ is counted with multiplicity.  Let $\beta_0$ be as in \cref{thm:main}, and  define
	\[
	\nu_{\pi}(T)= \min\{1,(1-\beta_{0})\log(C(\pi)T)\}.
	\]
	There exists a constant $\Cl[abcon]{LFZDE_repulsion}\geq 1$ such that
	\[
	N_{\pi}^*(\sigma,T) \ll \nu_{\pi}(T) m^{\Cr{LFZDE_repulsion}m^3}(C(\pi)T)^{\Cr{LFZDE_repulsion} m^3(1-\sigma)}.
	\]
\end{corollary}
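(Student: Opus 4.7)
My approach is to refine \cref{thm:LFZDE} by splitting on the size of $\nu_\pi(T)$ and invoking the Deuring--Heilbronn repulsion encoded in property~(F)(iii) in the regime where $\beta_0$ is exceptionally close to~$1$. Set $L:=\log(C(\pi)T)$, and note that $C(\pi)\geq 3^m$ and $T\geq 1$ force $L\geq \log 3 > 1$, while uniformly on $|\gamma|\leq T$ one has $m\log(C(\pi)(|\gamma|+3)^m)\leq C_0 m^2 L$ for a suitable absolute $C_0>0$.

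First, I would dispatch the case $\nu_\pi(T)\geq \Cr{Siegel_2}/(C_0 m^2)$, which subsumes the subcase $\beta_0=\tfrac{1}{2}$ (provided that $C_0$ is also taken $\geq 2\Cr{Siegel_2}/\log 3$, since then $\nu_\pi(T)\geq (\log 3)/2$ in this subcase). In this regime, \cref{thm:LFZDE} applied to $N_\pi^*(\sigma,T)\leq N_\pi(\sigma,T)$ already delivers the claim after one chooses $\Cr{LFZDE_repulsion}$ large enough to absorb the loss of an $m^{-2}$ factor; explicitly it suffices that $\Cr{LFZDE_repulsion}m^3\geq \Cr{LFZDE}m^3+2$ and $\Cr{LFZDE_repulsion}\geq 10^7$.

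Second, in the remaining case $\nu_\pi(T)<\Cr{Siegel_2}/(C_0 m^2)$, the argument of the logarithm in~(F)(iii) automatically exceeds~$1$, so the repulsion bound is nontrivial. I would apply~(F)(iii) uniformly over $|\gamma|\leq T$ with $m\log(C(\pi)(|\gamma|+3)^m)\leq C_0 m^2 L$ to deduce that every nontrivial zero $\rho=\beta+i\gamma\neq\beta_0$ with $|\gamma|\leq T$ obeys
\[
1-\beta\,\geq\,\frac{\Cr{Siegel_1}}{C_0 m^2 L}\log\!\Big(\frac{\Cr{Siegel_2}}{(1-\beta_0)C_0 m^2 L}\Big).
\]
If $1-\sigma$ falls below the right-hand side then $N_\pi^*(\sigma,T)=0$ and there is nothing to prove; otherwise, rearranging and exponentiating yields
\[
\nu_\pi(T)\,=\,(1-\beta_0)L\,\geq\,\frac{\Cr{Siegel_2}}{C_0 m^2}\,(C(\pi)T)^{-C_1 m^2(1-\sigma)}
\]
for some absolute $C_1>0$. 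Multiplying this lower bound through \cref{thm:LFZDE} and enlarging $\Cr{LFZDE_repulsion}$ so that also $\Cr{LFZDE_repulsion}m^3\geq 10^7 m^3+C_1 m^2$ for all $m\geq 1$ produces the claimed estimate.

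The main delicate step is the inversion in the last paragraph: I need to confirm that the logarithmic gain in~(F)(iii), when combined with the $m$-dependence in $m\log(C(\pi)(|\gamma|+3)^m)$, yields only an $O(m^2(1-\sigma))$ contribution to the exponent of $C(\pi)T$, rather than $O(m^3(1-\sigma))$. This is exactly what is required for the resulting loss to be painlessly absorbed into the ample $O(m^3)$-scale slack already present in \cref{thm:LFZDE}. A small technical nuisance is choosing $C_0$ simultaneously large enough to bound $m\log(C(\pi)(|\gamma|+3)^m)\leq C_0 m^2 L$ and to force the threshold $\Cr{Siegel_2}/(C_0 m^2)$ below $\nu_\pi(T)$ in the easy case.
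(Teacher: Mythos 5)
Your proof is correct and follows essentially the same route as the paper's. Both arguments split according to whether one is in the Deuring--Heilbronn repulsion regime, apply \cref{thm:LFZDE} directly in the easy case where $\nu_\pi(T)$ is bounded below, and in the repulsion regime use (F)(iii) either to conclude $N_\pi^*(\sigma,T)=0$ outright or to extract a lower bound of the shape $\nu_\pi(T)\gg m^{-2}(C(\pi)T)^{-O(m^2(1-\sigma))}$, which one then multiplies through \cref{thm:LFZDE} and absorbs into $\Cr{LFZDE_repulsion}$; your case threshold is phrased in terms of $\nu_\pi(T)$ itself rather than the quantity $(1-\beta_0)m\log(C(\pi)T^m)$ used in the paper, but this is only a normalization difference.
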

\begin{proof}
If $\beta_0=\frac{1}{2}$ or $(1-\beta_0)m\log(C(\pi)T^m)\geq \frac{\Cr{Siegel_2}}{e}$, then the result follows from \cref{thm:LFZDE}.  Now, suppose that
\[
\beta_0>\frac{1}{2},\qquad (1-\beta_0)m\log(C(\pi)T^m)<\frac{\Cr{Siegel_2}}{e}.
\]
If
	\[
	\sigma > 1-\Cr{Siegel_1}\dfrac{\log\Big(\dfrac{\Cr{Siegel_2}}{(1-\beta_0)m\log(C(\pi)(|\gamma|+3)^m)}\Big)}{m\log(C(\pi)(|\gamma|+3)^m)}
	\]
	then by (F), we have that $N_{\pi}^*(\sigma,T)=0$.  Otherwise, $\sigma$ satisfies
	\begin{equation}
	\label{eqn:nu_lower}
	\frac{\Cr{Siegel_2}}{m^{2}}(C(\pi)(T+3)^m)^{-\frac{m}{\Cr{Siegel_1}}(1-\sigma)}\leq \frac{1-\beta_0}{m}\log(C(\pi)(T+3)^m)\ll \nu_{\pi}(T).
	\end{equation}
	It follows from \cref{thm:LFZDE} that $N_{\pi}^{*}(\sigma,T)\leq N_{\pi}(\sigma,T)$ is
	\[
	\ll m^{\Cr{LFZDE}m^3}(C(\pi)T)^{10^7 m^3(1-\sigma)}=\nu_{\pi}(T)m^{\Cr{LFZDE}m^3}(C(\pi)T)^{10^7 m^3(1-\sigma)}\nu_{\pi}(T)^{-1}.
	\]
	Bounding $\nu_{\pi}(T)^{-1}$ using \eqref{eqn:nu_lower}, we obtain the corollary.
\end{proof}

\section{Proof of \cref{thm:main}}
\label{sec:proof_of_main_theorem}

Let $L(s,\pi)\in\mathfrak{S}(m)$.  We will prove \cref{thm:main} when $\beta_0>\frac{1}{2}$ in (E), in which case (F) states that $\beta_0$ is a real simple zero of $L(s,\pi)$.  If $\beta_0=\frac{1}{2}$, then the proof is easier.

\subsection{Preliminaries}

We use the following smooth weight function.
\begin{lemma}
\label{lem:WeightChoice}
Let $x \geq 3$, $\epsilon \in (0,\frac{1}{4})$, and an integer $\ell \geq 2$. Define $B = \epsilon/(2 \ell \log x)$. There exists a continuous function $f(t)  = f(t; x, \ell, \epsilon)$ of a real variable $t$ such that:
\begin{enumerate}[(i)]
\item $0 \leq f(t) \leq 1$ for all $t \in \R$, and $f(t) \equiv 1$ for $\tfrac{1}{2} \leq t \leq 1$.
\item The support of $f$ is contained in the interval $[\tfrac{1}{2}-\frac{\epsilon}{\log x}, 1+\frac{\epsilon}{\log x}]$.
\item Its Laplace transform $F(z) = \int_{\R} f(t) e^{-zt} dt$ is entire and given by
\[
F(z) = e^{-(1+ 2\ell B)z} \cdot \Big(\frac{1-e^{(\frac{1}{2}+2\ell B)z}}{-z} \Big) \Big(\frac{1-e^{2Bz}}{-2Bz} \Big)^{\ell}.
\]
\item Let $s = \sigma + it$, $\sigma > 0$, $t \in \R$ and $\alpha$ be any real number satisfying $0 \leq \alpha \leq \ell$. Then
\[
|F(-s \log x)| \leq \frac{e^{\sigma \epsilon} x^{\sigma}}{|s| \log x} \cdot 
(1+x^{-\sigma/2} ) \cdot \Big(\frac{2\ell}{\epsilon |s|} \Big)^{\alpha}.
\]
Moreover, $|F(-s \log x)| \leq e^{\sigma \epsilon} x^{\sigma}$ and $1/2 < F(0) < 3/4$.
\item If $\frac{3}{4}<\sigma\leq 1$ and $x\geq 10$, then
\[
F(-\log x) - F(-\sigma\log x)=\Big(\frac{x}{\log x}-\frac{x^{\sigma}}{\sigma \log x}\Big)(1+O(\epsilon))+O\Big(\frac{x^{1/2}}{\log x}\Big).
\]
\end{enumerate}
\end{lemma}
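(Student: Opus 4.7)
The plan is to construct $f$ explicitly as a convolution so that (iii) holds by design and (i), (ii) are immediate. Set $g_B := (2B)^{-1}\mathbbm{1}_{[-2B,0]}$, let $g_B^{*\ell}$ be its $\ell$-fold self-convolution (a continuous probability density supported on $[-2\ell B,0]$ since $\ell\geq 2$), and define $f := \mathbbm{1}_{[1/2,\,1+2\ell B]} * g_B^{*\ell}$. Since $2\ell B = \epsilon/\log x$, the support of $f$ is contained in $[1/2-\epsilon/\log x,\,1+\epsilon/\log x]$, giving (ii); and since $f$ is the convolution of a probability density with a $[0,1]$-valued indicator, $0\leq f\leq 1$. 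For $t\in[1/2,1]$ the shifted indicator $u\mapsto\mathbbm{1}_{[1/2,\,1+2\ell B]}(t-u)$ is identically one on $[-2\ell B,0]$, so $f(t)=\int g_B^{*\ell}=1$, establishing (i). Property (iii) then follows by evaluating $\int_a^b e^{-zt}\,dt=(e^{-az}-e^{-bz})/z$ on each building block and using that the Laplace transform sends convolution to product; entirety of $F$ is automatic since every $z=0$ singularity is removable.

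For (iv), I will write $F(-s\log x)=P(z)Q(z)^{\ell}$ with $z=-s\log x$, $s=\sigma+it$, and bound each factor by the triangle inequality. The factor $P(z)=(e^{-(1+2\ell B)z}-e^{-z/2})/(-z)$ is controlled using $2\ell B\log x=\epsilon$ by $e^{\epsilon\sigma}x^{\sigma}(1+x^{-\sigma/2})/(|s|\log x)$. For $Q$ I will use two competing estimates: the $L^{1}$ bound $|Q(-s\log x)|\leq\int g_B=1$ (valid for $\sigma\geq 0$, since the modulus of the integrand in the defining Laplace integral is at most $1$ on $[-2B,0]$) and the numerator-triangle-inequality bound $|Q(-s\log x)|\leq 1/(B|s|\log x)=2\ell/(\epsilon|s|)$. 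Writing $|Q|^{\ell}=|Q|^{\ell-\alpha}|Q|^{\alpha}$ and applying the first bound to the first factor and the second to the second yields $|Q(-s\log x)|^{\ell}\leq(2\ell/(\epsilon|s|))^{\alpha}$ for any $0\leq\alpha\leq\ell$; multiplying by the bound on $P$ produces the displayed inequality. The coarser bound $|F(-s\log x)|\leq e^{\sigma\epsilon}x^{\sigma}$ and the numerical bounds on $F(0)=\int f$ fall out of the support of $f$ together with $f\equiv 1$ on $[1/2,1]$.

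Finally, (v) is a Taylor expansion at real arguments. With $z=-\sigma\log x$ real and $\sigma\in(3/4,1]$, the identity $2B\log x=\epsilon/\ell$ gives $((1-e^{-\epsilon\sigma/\ell})/(\epsilon\sigma/\ell))^{\ell}=1+O(\epsilon)$ uniformly in $\sigma$; combined with $e^{\epsilon\sigma}=1+O(\epsilon)$ and the negligible secondary term of order $x^{-\sigma/2}/(\sigma\log x)=O(x^{1/2}/\log x)$, this yields $F(-\sigma\log x)=(x^{\sigma}/(\sigma\log x))(1+O(\epsilon))+O(x^{1/2}/\log x)$, and subtraction from the analogous formula at $\sigma=1$ gives (v). The argument is essentially bookkeeping around this explicit $f$; the only step demanding real care is the interpolation in (iv), since the right choice of $\alpha$ is what converts the harmless-looking factor $Q^{\ell}$ into the large-$|s|$ decay that will drive the subsequent contour shifts in the proof of \cref{thm:main}.
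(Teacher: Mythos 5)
Your construction of $f$ as the convolution $\mathbbm{1}_{[1/2,\,1+2\ell B]}*g_B^{*\ell}$ is the standard one, and (i)--(iv) are established correctly; in particular, the interpolation in (iv) between the two bounds $|Q|\leq 1$ (from positivity and $\sigma\geq 0$) and $|Q|\leq 2\ell/(\epsilon|s|)$ (from the triangle inequality on the numerator) is exactly the right mechanism. The paper itself does not prove the lemma but defers it to Thorner--Zaman, whose construction is essentially the one you give.

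The treatment of (v), however, has a genuine gap: ``subtraction from the analogous formula at $\sigma=1$'' is not a valid step. Writing $F(-\sigma\log x)\log x=\frac{x^{\sigma}}{\sigma}\,g(\sigma)+O(x^{\sigma/2})$ with $g(\sigma)=\big(\frac{e^{\sigma\epsilon/\ell}-1}{\sigma\epsilon/\ell}\big)^{\ell}$ and subtracting, the contribution of the $g$-factors is
\[
x\,g(1)-\frac{x^{\sigma}}{\sigma}\,g(\sigma)=\Big(x-\frac{x^{\sigma}}{\sigma}\Big)g(1)+\frac{x^{\sigma}}{\sigma}\,\big(g(1)-g(\sigma)\big).
\]
Knowing only $g(\sigma)=1+O(\epsilon)$, as your proposal records, gives the second term as $O(\epsilon\,x^{\sigma}/\sigma)$, whereas (v) requires it to be $O\big(\epsilon\,(x-x^{\sigma}/\sigma)\big)$; the latter tends to $0$ as $\sigma\to 1^-$ while the former does not, so the claimed relative error is not obtained. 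The missing observation is that $g$ is the \emph{same} smooth function at both endpoints with $g'(\sigma)=O(\epsilon)$ uniformly on $(3/4,1]$, hence $g(1)-g(\sigma)=O\big(\epsilon(1-\sigma)\big)$. Combined with the elementary inequality $(1-\sigma)\,x^{\sigma}/\sigma\ll x-x^{\sigma}/\sigma$, valid for $x\geq 10$ and $\sigma\in(3/4,1]$ because $\sigma x^{1-\sigma}-1\gg 1-\sigma$ there, this converts the correction into $O\big(\epsilon\,(x-x^{\sigma}/\sigma)\big)$ and closes (v). Without the extra factor $1-\sigma$ in $g(1)-g(\sigma)$, the argument does not go through.
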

\begin{proof}
This is contained in the statement of \cite[Lemma 2.2]{ThornerZaman2019}.
\end{proof}

Using \cref{lem:WeightChoice} and (C), we closely approximate
\[
\sum_{n\leq x}a_{\pi}(n)\Lambda(n)
\]
with a smoothed sum.

\begin{lemma}
\label{lem:unsmoothing}
Let $\pi\in\mathfrak{S}(m)$ and
\[
x \geq \Cr{xrange}^{145}m^{26390m^4} C(\pi)^{20880m^3},\qquad 0<\epsilon<\min\{x^{-\frac{1}{145m^3}},\tfrac{1}{4}\}.
\]
If $f$ is given by \cref{lem:WeightChoice}, then
\[
\Big|\sum_{n \leq x} a_{\pi}(n) \Lambda(n)-\sum_{n = 1}^{\infty} a_{\pi}(n) \Lambda(n) f \Big(\frac{\log n}{\log x} \Big) \Big|\ll mx^{1-\frac{1}{2m}}+\varepsilon x.
\]
\end{lemma}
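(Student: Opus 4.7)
The plan is to split the error
\[
\sum_{n=1}^\infty a_\pi(n)\Lambda(n)\bigl[\mathbf{1}_{n\leq x}-f(\log n/\log x)\bigr]
\]
according to where the two weights can disagree. By the support and plateau properties of $f$ in \cref{lem:WeightChoice}, the indicator $\mathbf{1}_{n\leq x}$ and $f(\log n/\log x)$ both equal $1$ on $n\in[x^{1/2},x]$, so only $n\in[1,x^{1/2}]\cup (x,xe^\epsilon]$ can contribute. Using $0\leq f\leq 1$, the absolute value of the difference is bounded above by
\[
\sum_{n\leq x^{1/2}}|a_\pi(n)|\Lambda(n)+\sum_{x<n\leq xe^\epsilon}|a_\pi(n)|\Lambda(n).
\]

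First I would handle the small-$n$ tail using the pointwise bound of (C): since $|\alpha_{j,\pi}(p)|\leq p^{1-1/m}$, one has $|a_\pi(n)|\leq mn^{1-1/m}$ on prime powers, so Chebyshev's bound gives
\[
\sum_{n\leq x^{1/2}}|a_\pi(n)|\Lambda(n)\leq mx^{(1-1/m)/2}\sum_{n\leq x^{1/2}}\Lambda(n)\ll mx^{1-\tfrac{1}{2m}},
\]
which supplies the first term of the stated error. For the upper tail I would invoke the short-interval estimate \eqref{eqn:new_selberg_sieve_ST} from (D) with $T=1/\epsilon$, so that $xe^\epsilon=xe^{1/T}$ and
\[
\sum_{x<n\leq xe^{1/T}}|a_\pi(n)|\Lambda(n)\ll m\frac{x}{T}\ll \epsilon x,
\]
supplying the second term.

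The main obstacle is verifying the admissibility hypothesis attached to \eqref{eqn:new_selberg_sieve_ST}, namely
\[
x\geq \Cr{xrange} m^{182m^4}(C(\pi)/\epsilon)^{144m^3}.
\]
Using the constraint $\epsilon<x^{-1/(145m^3)}$ on the smoothing parameter yields $(C(\pi)/\epsilon)^{144m^3}<C(\pi)^{144m^3}x^{144/145}$, so the requirement reduces to $x^{1/145}\geq \Cr{xrange} m^{182m^4}C(\pi)^{144m^3}$, equivalently
\[
x\geq \Cr{xrange}^{145}m^{26390m^4}C(\pi)^{20880m^3},
\]
which is exactly the hypothesis of the lemma. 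The exponents $144$, $182$, and $145$ are balanced precisely so as to produce the stated range; this delicate calibration is the crux of the argument, and with it the two bounds above combine to give the claim.
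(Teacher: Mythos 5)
Your decomposition of the discrepancy, the small-$n$ bound via (C) together with a Chebyshev count, and the short-interval bound via (D) with $T=1/\epsilon$ all match the paper's argument step for step. The one place where the proof does not actually work as written is the admissibility check, and that is precisely the step you flag as ``the crux.'' The implication you claim is backwards: from $\epsilon < x^{-1/(145m^3)}$ one gets $1/\epsilon > x^{1/(145m^3)}$, hence
\[
\bigl(C(\pi)/\epsilon\bigr)^{144m^3} > C(\pi)^{144m^3}\,x^{144/145},
\]
which is the \emph{reverse} of what you wrote. An upper bound on $\epsilon$ forces $T=1/\epsilon$ to be \emph{large}, which makes the hypothesis $x\geq \Cr{xrange}m^{182m^4}(C(\pi)T)^{144m^3}$ of \eqref{eqn:new_selberg_sieve_ST} harder, not easier, to satisfy; so this line does not establish the admissibility of (D).

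The underlying issue is a sign error in the statement of the lemma itself: what the argument actually needs is a \emph{lower} bound $\epsilon \geq x^{-1/(145m^3)}$, so that $T=1/\epsilon \leq x^{1/(145m^3)}$, in which case $(C(\pi)/\epsilon)^{144m^3}\leq C(\pi)^{144m^3}x^{144/145}$ and the admissibility condition reduces exactly to $x^{1/145}\geq \Cr{xrange}m^{182m^4}C(\pi)^{144m^3}$, i.e.\ the stated range. (Note that the $\epsilon$ eventually fed into this lemma, namely the $\epsilon=\min\{1/5,\,2A\ell\,x^{-1/(2A\ell)}\}$ from \cref{remaining} with $2A\ell = 2A^2\Cr{LFZDE_repulsion}m^3 \gg 145m^3$, indeed satisfies $\epsilon > x^{-1/(145m^3)}$, not $\epsilon<x^{-1/(145m^3)}$.) A correct write-up should either point out that the hypothesis must read $\epsilon \geq x^{-1/(145m^3)}$, or, keeping the stated hypothesis, it should not assert the reversed inequality as a consequence of it. As it stands, the chain ``$\epsilon<x^{-1/(145m^3)} \Rightarrow (C(\pi)/\epsilon)^{144m^3}<C(\pi)^{144m^3}x^{144/145}$'' is false and is the load-bearing step, so the proof has a genuine gap.
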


\begin{proof}
By hypothesis, we have $0<\epsilon<\frac{1}{4}$.  As such, \cref{lem:WeightChoice} renders the equality
\[
\sum_{n \leq x} a_{\pi}(n) \Lambda(n)=\sum_{n = 1}^{\infty} a_{\pi}(n) \Lambda(n) f \Big(\frac{\log n}{\log x} \Big)
+O\Big(\sum_{\substack{1\leq n \leq \sqrt{x} \\ x\leq n\leq xe^{\epsilon}}}|a_{\pi}(n)| \Lambda(n)\Big).
\]
We apply (A), (C), and (D) with $T=\epsilon^{-1}$, the prime number theorem $\sum_{n\leq x}\Lambda(n)\sim x$, and partial summation to obtain
\[
\Big(\sum_{n \leq \sqrt{x}}+\sum_{x<n\leq xe^{\epsilon}}\Big) |a_{\pi}(n)| \Lambda(n) \ll m\sum_{n\leq \sqrt{x}}n^{1-\frac{1}{m}}\Lambda(n)+\epsilon m x\ll mx^{1-\frac{1}{2m}}+\epsilon m x.\qedhere
\]
\end{proof}

We proceed to asymptotically evaluate the smoothed sum of $a_{\pi}(n) \Lambda(n)$.  We let $\rho=\beta+i\gamma$ run through the nontrivial zeros of $L(s,\pi)$, and $\sum_{\rho}'$ denotes a sum over $\rho\neq\beta_0$, where each zero is counted with multiplicity.
\begin{lemma}
\label{lem:residue}
If $x \geq 3$ and $\ell\geq m^3$, then
\begin{multline*}
\frac{1}{\log x} \sum_{n = 1}^{\infty} a_{\pi}(n) \Lambda(n) f \Big(\frac{\log n}{\log x} \Big)=r_{\pi}F(-\log x)-F(-\beta_0 \log x)\\
-\sideset{}{'}\sum_{|\rho|>\frac{1}{4}} F(-\rho \log x)+ O \Big(\Big(\frac{\ell}{\epsilon}\frac{x^{1-\frac{1}{2m}}}{\log x}+mx^{\frac{1}{4}}\Big)\log C(\pi)\Big).
\end{multline*}
\end{lemma}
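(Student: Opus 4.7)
My plan is to write the smoothed sum as a contour integral via Laplace inversion and to evaluate it by shifting the contour past the critical strip. By Lemma 4.1(iii), $F$ is entire, and Laplace inversion combined with the substitution $z=-s\log x$ yields, for any real $c$, $f(\log n/\log x) = \tfrac{\log x}{2\pi i}\int_{(c)} F(-s\log x)\,n^{-s}\,ds$. Taking $c>1$ so that the Dirichlet series for $-L'/L(s,\pi)$ converges absolutely by (A), multiplying by $a_\pi(n)\Lambda(n)$, summing over $n$, and interchanging sum and integral (justified by absolute convergence on $\re(s)=c$ together with the $|\im s|$-decay of $F(-s\log x)$ supplied by Lemma 4.1(iv)), I arrive at
\[
\frac{1}{\log x}\sum_{n=1}^{\infty} a_\pi(n)\Lambda(n)\,f\!\left(\frac{\log n}{\log x}\right) = \frac{1}{2\pi i}\int_{(c)} F(-s\log x)\!\left(-\frac{L'}{L}(s,\pi)\right) ds.
\]

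Next I push the contour to $\re(s)=-\tfrac{1}{2m}$, closing with horizontal segments at $|\im s|=T$ and sending $T\to\infty$. The horizontal pieces vanish because the decay of $F$ (Lemma 4.1(iv) with $\alpha=\ell$) dominates the polynomial-in-$|\im s|$ growth of $L'/L(s,\pi)$ supplied by the Hadamard product \eqref{Hadamard-product} combined with Lemma 3.1. The residue theorem then captures $r_\pi F(-\log x)$ from the pole of $-L'/L$ at $s=1$ (property (B)), $-F(-\beta_0\log x)$ from the simple zero $\beta_0$ (property (F)), and $-F(-\rho\log x)$ (with multiplicity) at each other nontrivial zero $\rho\neq\beta_0$. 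The shift additionally passes the trivial zeros $s_0=-\mu_\pi(j)$ with $\re(\mu_\pi(j))<\tfrac{1}{2m}$; by (C), no trivial zero with $k\geq 1$ lies in the shifted strip.

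It remains to isolate the stated main terms and bound everything else. Splitting the nontrivial-zero sum at $|\rho|=\tfrac14$ identifies the $|\rho|>\tfrac14$ piece with the stated main term. Zeros with $|\rho|\leq\tfrac14$ satisfy $|\rho-1|\leq 2$, so Lemma 3.1 yields $O(m\log C(\pi))$ such zeros, and the direct bound $|F(z)|\leq\int|f(t)|e^{-t\re z}\,dt$ gives $|F(-\rho\log x)|\ll x^{1/4}$, for a total of $O(mx^{1/4}\log C(\pi))$. Each trivial-zero residue has magnitude at most $x^{-\re(\mu_\pi(j))}\leq x^{1-1/m}$ and there are at most $m$ of them, contributing $O(mx^{1-1/m})$, which fits inside $\tfrac{\ell}{\epsilon}x^{1-1/(2m)}(\log x)^{-1}\log C(\pi)$ because $\ell\geq m^3$. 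The residual integral on $\re(s)=-\tfrac{1}{2m}$ is controlled by combining the analogue $|F(-s\log x)|\ll x^{-1/(4m)}\min\{1,(\ell/(\epsilon|s|))^\ell/(|s|\log x)\}$ of Lemma 4.1(iv) on the reflected half-plane (proved via the same $\ell$-fold integration by parts) with $|L'/L(s,\pi)|\ll m\log(C(\pi)(|\im s|+2))+m^2$, which follows from the functional equation, the $\eta=\tfrac{1}{2m}$ case of (D) applied on $\re(1-s)=1+\tfrac{1}{2m}$, and Stirling for $L'/L(s,\pi_\infty)$; the effective truncation $|s|\lesssim\ell/\epsilon$ forced by the $F$-decay produces the displayed $(\ell/\epsilon)$ prefactor.

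The main obstacle will be the residual-integral estimate: one must balance the polynomial growth of $L'/L(s,\pi)$ in $|\im s|$ against the $(\ell/(\epsilon|s|))^\ell$ decay of $F$ and propagate explicit $m$-dependence through every implied constant, so that all pieces consolidate precisely into the envelope $\tfrac{\ell}{\epsilon}x^{1-1/(2m)}(\log x)^{-1}\log C(\pi)$. A closely related bookkeeping challenge is verifying that the trivial-zero residues, which could \emph{a priori} be as large as $mx^{1-1/m}$, actually fit inside this envelope rather than dominating it.
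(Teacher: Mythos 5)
Your approach---a single contour shift from $\re(s)=c>1$ all the way to $\re(s)=-\tfrac{1}{2m}$, picking up the pole at $s=1$, the trivial zeros, and all nontrivial zeros---is the classical prime-number-theorem argument and is genuinely different from what the paper does. The paper instead writes
\[
-\frac{L'}{L}(s,\pi)=\frac{r_\pi}{s-1}+\frac{r_\pi}{s}+\frac{\log q_\pi}{2}+\frac{L'}{L}(s,\pi_\infty)-\frac{\Lambda'}{\Lambda}(s,\pi),
\]
evaluates only the $\Lambda'/\Lambda$ integral by residues (this captures exactly the nontrivial zeros, because the Hadamard factorization~\eqref{Hadamard-product} has no other singularities), and shifts the remaining, pole-free piece merely to $\re(s)=1-\tfrac{1}{2m}$. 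Since $\re(\mu_\pi(j))\geq -1+\tfrac{1}{m}$ by (C), every pole of $L'/L(\cdot,\pi_\infty)$ has real part $\leq 1-\tfrac{1}{m}$, so that line stays at distance $\geq\tfrac{1}{2m}$ from all Gamma-factor poles and the residual integral is bounded uniformly. Trivial zeros never enter at all.

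Your route hits a genuine obstruction that is not addressed: the line $\re(s)=-\tfrac{1}{2m}$ can pass arbitrarily close to---or exactly through---a trivial zero. These are at $s=-\mu_\pi(j)$, and (C) allows $\re(\mu_\pi(j))=\tfrac{1}{2m}$, in which case $\Gamma'/\Gamma\big(\tfrac{s+\mu_\pi(j)}{2}\big)$ has a pole \emph{on} your contour and the residual integral diverges. The functional-equation reflection you invoke does not save you: it expresses $L'/L(s,\pi)$ with an explicit $-L'/L(s,\pi_\infty)$ term evaluated at $\re(s)=-\tfrac{1}{2m}$, and that term is precisely where the trouble lives. Fixing this requires either choosing a shift abscissa that is quantitatively separated from all $-\re(\mu_\pi(j))$ (a pigeonhole argument, which then has to be carried through the $m$-explicit bookkeeping) or adopting the paper's decomposition. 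You flagged the trivial-zero \emph{residues} as a bookkeeping concern (and you correctly verified that $m x^{1-1/m}$ fits under the envelope since $\ell\geq m^3$), but the real issue is the \emph{proximity of those poles to the contour}, which your sketch quietly assumes away by writing a pointwise bound $|L'/L(s,\pi)|\ll m\log(C(\pi)(|\im s|+2))+m^2$ on $\re(s)=-\tfrac{1}{2m}$ that is simply false near the Gamma-factor poles. Everything else in your outline (Laplace inversion, splitting the nontrivial-zero sum at $|\rho|=\tfrac14$, the $O(mx^{1/4}\log C(\pi))$ small-zero bound via Lemma~\ref{lem:number-of-zeros}, the reflected form of Lemma~\ref{lem:WeightChoice}(iv)) is sound and in some respects would even yield a slightly smaller residual integral ($x^{-1/(2m)}$ in place of $x^{1-1/(2m)}$), but the contour-location issue is a real gap that the paper's cleaner decomposition is specifically built to avoid.
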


\begin{proof}
By Laplace inversion and (B), we obtain the identity
\begin{equation}
\label{integral}
\begin{aligned}
\frac{1}{\log x}&\sum_{n = 1}^{\infty} a_{\pi}(n) \Lambda(n) f \Big(\frac{\log n}{\log x} \Big)\\
&= \frac{1}{2\pi i} \int_{3-i\infty}^{3+i\infty} -\frac{L^{\prime}}{L}(s,\pi) F(-s \log x) ds\\
&=\frac{1}{2\pi i} \int_{3-i\infty}^{3+i\infty} \Big(\frac{r_{\pi}}{s-1}+\frac{r_{\pi}}{s}+\frac{\log q_{\pi}}{2}+\frac{L'}{L}(s,\pi_{\infty})-\frac{\Lambda^{\prime}}{\Lambda}(s,\pi)\Big) F(-s \log x) ds.
\end{aligned}
\end{equation}
By \cref{lem:WeightChoice}, $F$ is entire and decays rapidly in vertical strips. By (C), we have that $-\frac{L'}{L}(s,\pi_{\infty})$ is holomorphic for $\Re(s)> 1-\frac{1}{m}$.  It follows that \eqref{integral} equals
\begin{multline*}
r_{\pi}F(-\log x)-\frac{1}{2\pi i}\int_{3-i\infty}^{3+i\infty}\frac{\Lambda'}{\Lambda}(s,\pi)F(-s\log x)ds\\
+\frac{1}{2\pi i}\int_{1-\frac{1}{2m}-i\infty}^{1-\frac{1}{2m}+i\infty}\Big(\frac{r_{\pi}}{s-1}+\frac{r_{\pi}}{s}+\frac{\log q_{\pi}}{2}+\frac{L'}{L}(s,\pi_{\infty})\Big)F(-s\log x)ds.
\end{multline*}

By (A), we have that $r_{\pi}\in[0,m]$.  Using Stirling's formula and (C), it follows that
\[
\Big|\frac{r_{\pi}}{s-1}+\frac{r_{\pi}}{s}+\frac{\log q_{\pi}}{2}+\frac{L'}{L}(s,\pi_{\infty})\Big|\ll m^2+m\log(|\im(s)|+3)+\log C(\pi),\qquad \Re(s)=1-\frac{1}{2m}.
\]
Therefore, by an application of \cref{lem:WeightChoice}(iv) (with $\alpha=0$ when $|\im(s)|\leq m$ and $\alpha=1$ when $|\im(s)|>m$), we observe that
\begin{align*}
&\Big|\frac{1}{2\pi i}\int_{1-\frac{1}{2m}-i\infty}^{1-\frac{1}{2m}+i\infty}\Big(\frac{r_{\pi}}{s-1}+\frac{r_{\pi}}{s}+\frac{\log q_{\pi}}{2}+\frac{L'}{L}(s,\pi_{\infty})\Big)F(-s\log x)ds\Big|\\
&\ll \frac{x^{1-\frac{1}{2m}}}{\log x}\int_{-m}^{m}(m^2+m\log(|t|+3)+\log C(\pi))dt\\
&+\frac{\ell x^{1-\frac{1}{2m}}}{\epsilon\log x}\int_{|t|>m}(m^2+m\log(|t|+3)+\log C(\pi))\frac{dt}{|t|^2}\\
&\ll \frac{x^{1-\frac{1}{2m}}}{\log x}(m^3+m\log C(\pi))+\frac{\ell}{\epsilon m}\frac{x^{1-\frac{1}{2m}}}{\log x}(m^2+\log C(\pi))\\
&\ll \frac{\ell}{\epsilon}\frac{x^{1-\frac{1}{2m}}}{\log x}\log C(\pi).
\end{align*}
Consequently, by the residue theorem, \eqref{integral} equals
\[
r_{\pi}F(-\log x)-F(-\beta_0 \log x)-\sideset{}{'}\sum_{\rho}F(-\rho\log x)+O\Big(\frac{\ell}{\epsilon}\frac{x^{1-\frac{1}{2m}}}{\log x}\log C(\pi)\Big).
\]
For the zeros $\rho$ such that $|\rho|\leq\frac{1}{4}$, Lemmata \ref{lem:number-of-zeros} and \ref{lem:WeightChoice}(iv) imply that
\[
\sum_{\substack{ |\rho| \leq \frac{1}{4}}} |F(-\rho \log x)| \ll x^{\frac{1}{4}} \#\{\rho\colon |\rho|<\tfrac{1}{4}\}\ll mx^{\frac{1}{4}}\log C(\pi).
\]
The lemma follows once we combine the estimates above.
\end{proof}

\subsection{Estimating the sum over zeros}
\label{estimating-the-zeros}

We are in a position to evaluate the sum over nontrivial zeros $\rho$ in Lemma \ref{lem:residue} using the log-free zero density estimate in \cref{cor:LFZDE_repulsion}.
\begin{lemma}
\label{remaining}
Let
\begin{equation}
\label{eqn:param_ranges_1}
A\geq 2,\qquad \ell = A\Cr{LFZDE_repulsion}m^3,\qquad \varepsilon = \min\{\tfrac{1}{5},2A\ell x^{-1/(2A\ell)}\}.	
\end{equation}
Let $\delta_{\pi}$ be as in (E), and let $\eta_{\pi}(x)$ be as in \eqref{eqn:eta_pi_def}.  Let $\nu_{\pi}(T)$ be as in \cref{cor:LFZDE_repulsion}.  If
\begin{equation}
\label{eqn:param_ranges_2}
x\geq C(\pi)^{2A^2\Cr{LFZDE_repulsion}m^3},
\end{equation}
then
\[
\sideset{}{'}\sum_{|\rho| \geq \frac{1}{4} } |F(-\rho \log x)| 
\ll A^2\nu_{\pi}(1)m^{\Cr{LFZDE_repulsion}m^3} \frac{x}{\log x} e^{-(1-\frac{1}{A})\eta_{\pi}(x)}.
\]
\end{lemma}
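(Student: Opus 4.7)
The plan is to split the sum dyadically over $|\gamma|$, control $|F(-\rho\log x)|$ via \cref{lem:WeightChoice}(iv), and handle each shell using condition (E) together with Abel summation and the log-free zero density estimate \cref{cor:LFZDE_repulsion}. Decompose $\{\rho\neq\beta_0:|\rho|\geq 1/4\}$ into the small piece $\{1/4\leq|\rho|\leq 1\}$, treated directly via \cref{lem:number-of-zeros} together with $|F(-\rho\log x)|\ll x^{\beta}$, and dyadic shells $\cS_T=\{T\leq |\gamma|\leq 2T\}$ for $T=2^k$, $k\geq 0$. For each shell, condition (E) (combined with the exclusion of $\beta_0$ from the primed sum) forces $\beta\leq 1-\delta_\pi(2T+3)$ for every $\rho\in\cS_T$.

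Take $\alpha=0$ in \cref{lem:WeightChoice}(iv) to get $|F(-\rho\log x)|\ll x^\beta/(T\log x)$. Abel summation in $\sigma$ together with \cref{cor:LFZDE_repulsion} then yields
\[
\sideset{}{'}\sum_{\rho\in \cS_T} x^\beta \ll \nu_\pi(2T)\,m^{\Cr{LFZDE_repulsion}m^3}\, x\log x \int_{\delta_\pi(2T+3)}^{1/2}\exp\bigl(u[\Cr{LFZDE_repulsion}m^3\log(C(\pi)T)-\log x]\bigr)\,du.
\]
The hypothesis $x\geq C(\pi)^{2A^2\Cr{LFZDE_repulsion}m^3}$ forces $\Cr{LFZDE_repulsion}m^3\log C(\pi)\leq (\log x)/(2A^2)$, so for $T$ below a suitable threshold (polynomial in $x$) the integrand decays exponentially in $u$ at rate at least $(1-1/A)\log x$, and the integral collapses to its lower endpoint. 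The per-shell contribution becomes $\ll A\,\nu_\pi(2T)\,m^{\Cr{LFZDE_repulsion}m^3}(x/(T\log x))\exp(-(1-1/A)\delta_\pi(2T+3)\log x)$, which (rewriting $1/T=e^{-\log T}$ and using $T^{-1/A}\leq 1$) is bounded by $A\,\nu_\pi(2T)\,m^{\Cr{LFZDE_repulsion}m^3}(x/\log x)T^{-1/A}\exp(-(1-1/A)\eta_\pi(x))$ directly by the definition of $\eta_\pi$. The factor $T^{-1/A}$ then makes the dyadic sum geometrically convergent.

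For shells exceeding the threshold where the above integral argument breaks down, take $\alpha=\ell$ in \cref{lem:WeightChoice}(iv); the factor $(2\ell/(\epsilon T))^\ell$ with $\ell=A\Cr{LFZDE_repulsion}m^3$ and $\epsilon$ as in \eqref{eqn:param_ranges_1} supplies super-polynomial decay in $T$, so these shells contribute negligibly compared to any single small-$T$ shell. The main obstacle is reconciling the shell-dependent factor $\nu_\pi(2T)$ with the $\nu_\pi(1)$ in the claim: since $\nu_\pi(2T)\leq\nu_\pi(1)+(1-\beta_0)\log(2T)$ and the dominant $T^*$'s in the geometric sum are polynomial in $x$, while $\log x\ll A^2\log C(\pi)$ is not enforced (rather the reverse), one must exploit that the relevant regime of $T^*$ has $\log(2T^*)$ absorbed by an $A^2$ prefactor together with the trivial bound $\nu_\pi(2T)\leq 1$. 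Careful bookkeeping of $A$, $\ell$, $\epsilon$, and the factor $(1+x^{-\beta/2})$ from \cref{lem:WeightChoice}(iv) across the small-$T$ and large-$T$ regimes then produces the stated bound.
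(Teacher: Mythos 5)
Your broad strategy---dyadic decomposition in $|\gamma|$, \cref{lem:WeightChoice}(iv), and the repulsion-enhanced log-free zero density estimate \cref{cor:LFZDE_repulsion}---is the same family as the paper's, but you miss the key technical device that makes the paper's proof clean. The paper applies \cref{lem:WeightChoice}(iv) with the \emph{zero-dependent} exponent $\alpha=\ell(1-\beta)$. With the stated choices of $\ell$ and $\epsilon$, this single move factors the per-zero bound as
\[
\frac{\log x}{x}\,|F(-\rho\log x)|\ \ll\ T_j^{-\frac{1}{A}}\,(|\gamma|+3)^{\frac{1}{A}-1}x^{-(1-\beta)(1-\frac{1}{A})}\,(C(\pi)T_j)^{-A\Cr{LFZDE_repulsion}m^3(1-\beta)},
\]
so that: the $(|\gamma|+3)^{\frac{1}{A}-1}x^{-(1-\beta)(1-\frac{1}{A})}$ factor is controlled uniformly by $e^{-(1-\frac{1}{A})\eta_\pi(x)}$ via (E); the $(C(\pi)T_j)^{-A\Cr{LFZDE_repulsion}m^3(1-\beta)}$ factor plugs directly into $\int_0^1 (C(\pi)T_j)^{-A\Cr{LFZDE_repulsion}m^3\alpha}\,dN_\pi^*(1-\alpha,T_j)\ll m^{\Cr{LFZDE_repulsion}m^3}\nu_\pi(T_j)$; and the remaining $T_j^{-\frac{1}{A}}$ both tames the dyadic sum and furnishes the crucial observation $\nu_\pi(T_j)T_j^{-\frac{1}{2A}}\ll A\,\nu_\pi(1)$. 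There is no small-$T$/large-$T$ case split at all.

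Because you fix $\alpha=0$ on small shells and $\alpha=\ell$ on large ones, you are forced into a two-regime analysis with a threshold of size $T^*\asymp x^{(2A-1)/(2A^2\Cr{LFZDE_repulsion}m^3)}$, and you end up with two genuine loose ends. First, your statement that ``the dominant $T^*$'s in the geometric sum are polynomial in $x$'' is not right: with the $T^{-1/A}$ factor in place, the sum localizes near $T=O(e^A)$, not near the threshold. Second, and more substantively, the reconciliation of $\nu_\pi(2T)$ with $\nu_\pi(1)$ is left as a handwave. The clean resolution---which you should supply---is exactly the paper's observation: write $T^{-1/A}=T^{-1/(2A)}\cdot T^{-1/(2A)}$, use $\sup_{t\ge 1}\{t^{-1/(2A)}\log(C(\pi)t)\}\ll A$ together with $\nu_\pi(T)\le 1$ to conclude $\nu_\pi(2T)T^{-1/(2A)}\ll A\,\nu_\pi(1)$, and let the leftover $T^{-1/(2A)}$ sum to $O(A)$. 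Your large-$T$ treatment by pure zero-count plus $(2\ell/(\epsilon T))^\ell$ decay does work out (the contribution is $O(m^{\Cr{LFZDE_repulsion}m^3}A^{-\ell})$, which is dominated using $(1-\beta_0)\ge C(\pi)^{-\Cr{Siegel_effective}m}$ from (F) and the hypothesis on $x$), but you did not check this comparison, and the statement that these shells are ``negligible compared to any single small-$T$ shell'' is false without invoking the lower bound from (F). In short: the route can be completed, but it is both messier than and strictly inferior to the single-pass argument with $\alpha=\ell(1-\beta)$, and as written the $\nu_\pi$ reconciliation and the large-$T$ comparison are gaps rather than proofs.
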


\begin{proof}
Let $T_0=0$, and for $j \geq 1$, let $T_{j} = 2^{j-1}$.  Consider the sum
\begin{equation}\label{Z}
Z_{j} = \frac{\log x}{x} \sideset{}{'}\sum_{\substack{|\rho|\geq\frac{1}{4} \\ T_{j-1} \leq |\gamma| \leq T_{j} }} |F(-\rho \log x)|.
\end{equation}
First, we estimate the contribution of each zero $\rho$ appearing in $Z_{j}$. Let $\rho = \beta+i\gamma$ satisfy $T_{j-1} \leq |\gamma| \leq T_{j}$ and $|\rho| \geq \frac{1}{4}$, so that $|\rho| \geq \max(T_{j-1}, 1/4) \geq T_{j}/4$ and $|\rho| \geq \frac{1}{13}(|\gamma|+3)$. Therefore, by \cref{lem:WeightChoice}(iv) with $\alpha = \ell(1-\beta)$ and our choice of $\varepsilon$, we have that
\[
\frac{\log x}{x} |F(-\rho \log x)| \ll \frac{x^{\beta-1}}{|\rho|} \Big(\frac{2\ell}{\varepsilon|\rho|} \Big)^{\ell(1-\beta)} \ll T_{j}^{-\frac{1}{A}}(|\gamma|+3)^{-(1-\frac{1}{A})}x^{-(1-\beta)(1-\frac{1}{A})}(x^{\frac{1}{2A}} T_{j}^{\ell})^{-(1-\beta)}.
\]
By \eqref{eqn:param_ranges_1} and \eqref{eqn:param_ranges_2}, we have that
\begin{equation}\label{specifications}
\frac{\log x}{x} |F(-\rho \log x)| \ll T_{j}^{-\frac{1}{A}}(|\gamma|+3)^{\frac{1}{A}-1} x^{-(1-\beta)(1-\frac{1}{A})}(C(\pi) T_{j})^{-A\Cr{LFZDE_repulsion}(1-\beta)m^3}.
\end{equation}
From (E) and \eqref{eqn:eta_pi_def}, one has
\begin{equation}
\label{eqn:etabound}
(|\gamma|+3)^{\frac{1}{A}-1} x^{-(1-\beta)(1-\frac{1}{A})} 
 \leq e^{-(1-\frac{1}{A})\eta_{\pi}(x)}.
\end{equation}
Combining  \eqref{Z}, \eqref{specifications}, and \eqref{eqn:etabound}, we derive
\[
Z_{j} \ll e^{-(1-\frac{1}{A})\eta_{\pi}(x)} T_{j}^{-\frac{1}{A}} \sideset{}{'}\sum_{T_{j-1} \leq |\gamma| \leq T_{j}} 
(C(\pi) T_{j})^{-A\Cr{LFZDE_repulsion}(1-\beta)m^3}.
\]
By partial summation and \cref{cor:LFZDE_repulsion}, it follows that
\[
\sideset{}{'}\sum_{T_{j-1} \leq |\gamma| \leq T_{j}} 
(C(\pi) T_{j})^{-A\Cr{LFZDE_repulsion}(1-\beta)m^3} \ll \int_{0}^{1} (C(\pi) T_{j})^{-A\Cr{LFZDE_repulsion} m^3\alpha} dN_{\pi}^{\ast}(1-\alpha, T_{j})
\ll m^{\Cr{LFZDE_repulsion}m^3}\nu_{\pi}(T_{j}).
\]
Observe that
\[
\nu_{\pi}(T_{j}) T_{j}^{-\frac{1}{2A}} \leq (1-\beta_0 ) \sup_{t \geq 1} \{t^{-\frac{1}{2A}} \log(C(\pi)t) \} \ll A\nu_{\pi}(1).
\]
The lemma now follows from the bound
\[
\sum_{j = 1}^{\infty} Z_{j} \ll A\nu_{\pi}(1)m^{\Cr{LFZDE_repulsion}m^3} e^{-(1-\frac{1}{A})\eta_{\pi}(x)} \sum_{j = 0}^{\infty} 2^{-\frac{j-1}{2A}} 
\ll A^2\nu_{\pi}(1)m^{\Cr{LFZDE_repulsion}m^3} e^{-(1-\frac{1}{A})\eta_{\pi}(x)}.\qedhere
\]
\end{proof}

\begin{lemma}
\label{lem:nu_pi}
	If $x \geq C(\pi)^{1056\Cr{Siegel_effective}\Cr{LFZDE_repulsion}m^5}$, then
	\[
	x^{1-\frac{1}{1056\Cr{LFZDE_repulsion}m^4}}\ll \nu_{\pi}(1)x\ll x-\frac{x^{\beta_0}}{\beta_0}.
	\]
\end{lemma}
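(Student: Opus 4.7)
The plan is to handle the lower and upper bounds on $\nu_{\pi}(1)$ separately, using only property (F)(ii) and elementary calculus. For the lower bound, I would use $1-\beta_{0}\geq C(\pi)^{-\Cr{Siegel_effective}m}$ from (F)(ii), together with $\log C(\pi)\geq \log(3^{m})\geq 1$ (forced by the definition of $C(\pi)$), to obtain
\[
\nu_{\pi}(1)\geq \min\{1,C(\pi)^{-\Cr{Siegel_effective}m}\log C(\pi)\}\geq C(\pi)^{-\Cr{Siegel_effective}m}.
\]
The exponent $1056\Cr{Siegel_effective}\Cr{LFZDE_repulsion}m^{5}$ in the hypothesis on $x$ is tailored so that this bound rearranges to $\nu_{\pi}(1)\geq x^{-1/(1056\Cr{LFZDE_repulsion}m^{4})}$, delivering the claimed lower bound after multiplying by $x$.

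For the upper bound $\nu_{\pi}(1)x\ll x-x^{\beta_{0}}/\beta_{0}$, I would set $\delta=1-\beta_{0}$ and $u=\delta\log x$. Under the standing assumption $\beta_{0}>\tfrac{1}{2}$ of this section, property (F) forces $\beta_{0}>\tfrac{3}{4}$, so $\delta\in(0,\tfrac{1}{4})$. Writing
\[
x-\frac{x^{\beta_{0}}}{\beta_{0}}=\frac{x}{1-\delta}\bigl(1-\delta-e^{-u}\bigr),
\]
I would split on the size of $u$. If $u\geq \log 4$, then $1-\delta-e^{-u}\geq \tfrac{3}{4}-\tfrac{1}{4}=\tfrac{1}{2}$, giving $x-x^{\beta_{0}}/\beta_{0}\gg x\geq \nu_{\pi}(1)x$. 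If $u<\log 4$, the hypothesis on $x$ forces $\log x$ large enough that $\delta=u/\log x\leq u/2$; a Taylor expansion then yields $1-\delta-e^{-u}=u-\delta+O(u^{2})\gg u$, so $x-x^{\beta_{0}}/\beta_{0}\gg xu\geq x\min\{1,\delta\log C(\pi)\}=\nu_{\pi}(1)x$, the inequality $\log C(\pi)\leq \log x$ being used at the last step.

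The main technical point, such as it is, is just bookkeeping in the case split for the upper bound so that the implied constants line up; no input beyond (F)(ii), the definition of $\nu_{\pi}$, and the hypothesis on $x$ is required.
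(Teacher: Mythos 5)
Your proof follows the same high-level plan as the paper's: a direct lower bound from property (F)(ii) and the hypothesis on $x$, and an upper bound by case-splitting on the size of $u=(1-\beta_0)\log x$. Where you differ is in the small-$u$ case: the paper makes the change of variables $t=(1-\beta_0)\log x$, exhibits the ratio as an explicit function $f(x,t)$, and shows by a monotonicity/sign-of-derivative computation that it is bounded by $e/(e-1)$; you instead estimate $1-\delta-e^{-u}$ directly. Both routes work, and yours is more elementary once cleaned up.

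There is, however, a small imprecision in the step ``a Taylor expansion then yields $1-\delta-e^{-u}=u-\delta+O(u^2)\gg u$.'' With only $\delta\le u/2$ and $u$ ranging up to $\log 4\approx 1.39$, the bound $1-e^{-u}\ge u-u^2/2$ gives $u-\delta-u^2/2\ge u(1-\tfrac12-\tfrac{\log 4}{2})$, which is \emph{negative}; the naive Taylor bound does not close the case by itself. The fix is cheap and available to you: the hypothesis $x\ge C(\pi)^{1056\,\Cr{Siegel_effective}\Cr{LFZDE_repulsion}m^5}$ with $C(\pi)\ge 3$ gives $\log x\ge 1056\log 3>10^3$, so in fact $\delta=u/\log x\le u/10^3$, and then $1-\delta-e^{-u}\ge u-u^2/2-u/10^3\ge u(1-\tfrac{\log 4}{2}-10^{-3})\gg u$. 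Alternatively, replace the Taylor step by concavity of $u\mapsto 1-e^{-u}$: the secant from $(0,0)$ to $(\log 4,\tfrac34)$ gives $1-e^{-u}\ge \tfrac{3}{4\log 4}\,u>0.54\,u$ on $[0,\log 4]$, which beats $\delta\le u/2$. Either amendment makes your argument complete.

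Everything else checks out: the reduction to $\nu_\pi(1)=(1-\beta_0)\log C(\pi)<1$, the large-$u$ case using $\delta<\tfrac14$, the identity $x-x^{\beta_0}/\beta_0=\tfrac{x}{1-\delta}(1-\delta-e^{-u})$, the use of $\log C(\pi)\le\log x$ to compare with $\nu_\pi(1)$, and the lower bound rearrangement from (F)(ii) plus the hypothesis on $x$ are all correct.
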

\begin{proof}
	It suffices to prove the lemma when $\nu_{\pi}(1)=(1-\beta_0)\log C(\pi)<1$.  We consider two cases.  First, if $(1-\beta_0)\log x\geq 1$, then
	\[
	\nu_{\pi}(1)x \ll x\ll x(1-2e^{-1})\leq x\Big(1-\frac{x^{-(1-\beta_0)}}{\beta_0}\Big)=x-\frac{x^{\beta_0}}{\beta_0}.
	\]
	
	Second, assume that $0<(1-\beta_0)\log x<1$.  Our hypothesis on the range of $x$ implies that $x\geq e^4$.  We claim that
	\begin{equation}
	\label{eqn:referee}
	\frac{(1-\beta_0)\log(x/e)}{1-e^{-(1-\beta_0)\log x}/\beta_0}\leq \frac{e}{e-1},	
	\end{equation}
	from which we deduce the desired bound
	\[
	\nu_{\pi}(1)x= (1-\beta_0)x\log C(\pi)\ll (1-\beta_0)x\log\frac{x}{e}\ll x\Big(1-\frac{e^{-(1-\beta_0)\log x}}{\beta_0}\Big)=x-\frac{x^{\beta_0}}{\beta_0}.
	\]
	To finish the proof of the lemma, we observe that $C(\pi)^{-\Cr{Siegel_effective}m}\ll \nu_{\pi}(1)$ by (F).  Now, the lemma now follows from our range of $x$.
	
	To prove the claimed bound in \eqref{eqn:referee}, we make the change of variables $(1-\beta_0)\log x = t$, in which case the left hand side of \eqref{eqn:referee} equals
	\[
	f(x,t)=\frac{e^t t(\log x-t)(\log x-1)}{(e^t (\log x-t)-\log x)\log x}.
	\]
	We maximize $f(x,t)$ when $x\geq e^4$ and $0<t\leq 1$.  Observe that
	\[
	\lim_{t\to 0^+}f(x,t)=1\leq \frac{e(\log x-1)^2}{((e-1)\log x-e)\log x}=\lim_{t\to 1^-}f(x,t),
	\]
	and the sign of $\frac{d}{dt}f(x,t)$ for $t\in(0,1]$ is the same as the sign of
	\begin{multline*}
	(e^t-(t+1))(\log x)^2-t(2e^t-(t+2))\log x+e^t t^2\\
	\geq ((e^t-(t+1))\log x-t(2e^t-(t+2)))\log x\\
	\geq (4(e^t-(t+1))-t(2e^t-(t+2)))\log x\geq 0.	
	\end{multline*}
	Thus, as $t$ monotonically increases from 0 to 1, $f(x,t)$ monotonically increases from 1 to
	\[
	\frac{e(\log x-1)^2}{((e-1)\log x-e)\log x}.
	\]
	It follows that
	\[
	\sup_{\substack{t\in(0,1],~x\geq e^4}}f(x,t) = \sup_{x\geq e^4}\frac{e(\log x-1)^2}{((e-1)\log x-e)\log x}=\lim_{x\to\infty}\frac{e(\log x-1)^2}{((e-1)\log x-e)\log x}=\frac{e}{e-1}.\qedhere
	\]
\end{proof}

\subsection{Proof of \cref{thm:main}}
Without loss of generality, we may assume that $\frac{3}{4}<\beta_0<1$.  Let $\Cr{main1}$ be suitably large, and let $A\geq 4$.  If $x\geq C(\pi)^{\Cr{main1}A^2 m^5}$, then by Lemmata \ref{lem:unsmoothing}--\ref{remaining},
\begin{multline*}
\sum_{n\leq x}a_{\pi}(n)\Lambda(n)=(r_{\pi}F(-\log x)-F(-\beta_0\log x))\log x\\
+O\Big(\nu_{\pi}(1)x\Big(\frac{m}{\nu_{\pi}(1)x^{\frac{1}{2m}}}+\frac{\epsilon}{\nu_{\pi}(1)}+\frac{\ell\log C(\pi)}{\epsilon\nu_{\pi}(1)x^{\frac{1}{2m}}}+m^{\Cr{LFZDE_repulsion}m^3}A^2 e^{-(1-\frac{1}{A})\eta_{\pi}(x)}\Big)\Big).
\end{multline*}
By \cref{lem:nu_pi} and the choices of $\ell$ and $\epsilon$ in \cref{remaining}, the $O$-term is
\[
\ll\nu_{\pi}(1)x(m^4 x^{-\frac{1}{33\Cr{LFZDE_repulsion}m^4}}+m^{\Cr{LFZDE_repulsion}m^3}A^2 e^{-(1-\frac{1}{A})\eta_{\pi}(x)}).
\]
By \cref{lem:WeightChoice}(iii), if $\frac{3}{4}<\sigma\leq 1$, then
\[
F(-\sigma\log x)\log x=\frac{x^{\sigma}}{\sigma}\Big(\frac{e^{\epsilon\sigma/\ell}-1}{\epsilon\sigma/\ell}\Big)^{\ell}+O(x^{\frac{\sigma}{2}})=\frac{x^{\sigma}}{\sigma}(1+O(\epsilon \sigma))+O(x^{\frac{\sigma}{2}}).
\]
This bound, along with \cref{lem:WeightChoice}(v), implies that
\[
(r_{\pi}F(-\log x)-F(-\beta_0\log x))\log x = r_{\pi}x-\frac{x^{\beta_0}}{\beta_0}+O(m(\epsilon x+\sqrt{x})).
\]
Our choice of $\epsilon$ and the lower bound for $\nu_{\pi}(1)x$ in \cref{lem:nu_pi} imply that
\[
r_{\pi}(\epsilon x+\sqrt{x})\ll m \epsilon x\ll m^5\nu_{\pi}(1)x^{1-\frac{1}{33\Cr{LFZDE_repulsion}m^4}},
\]
from which we conclude that
\[
\sum_{n\leq x}a_{\pi}(n)\Lambda(n) = r_{\pi}x-\frac{x^{\beta_0}}{\beta_0}+O(\nu_{\pi}(1)x(m^5 x^{-\frac{1}{33\Cr{LFZDE_repulsion}m^4}}+m^{\Cr{LFZDE_repulsion}m^3}A^2 e^{-(1-\frac{1}{A})\eta_{\pi}(x)})).
\]
To finish the proof, we invoke the upper bound for $\nu_{\pi}(1)x$ in \cref{lem:nu_pi}.

\section{Properties (E) and (F) for Rankin--Selberg $L$-functions}
\label{sec:ZFR}

Let $\pi\in\mathfrak{F}_m$ and $\pi'\in\mathfrak{F}_{m'}$.  We now compile the best known zero-free regions for $L(s,\pi\times\pi')$.

\begin{proposition}
	\label{prop:ZFR_SD}
	There exists a constant $\Cl[abcon]{ZFR}$ such that if $\pi\in\mathfrak{F}_m$ and $\pi'\in\mathfrak{F}_{m'}$ satisfy \eqref{eqn:special}, then $L(s,\pi\times\pi')\neq 0$ in the region
	\[
	\re(s)\geq 1-\frac{\Cr{ZFR}}{(m+m')\log(C(\pi)C(\pi')(|\im(s)|+3)^{m})}
	\]
	apart from at most one exceptional zero $\beta_1<1$.  If $\beta_1$ exists, then $\beta_1$ is both real and simple, and
	\[
	\text{$\pi=\tilde{\pi}~$ and $~\pi'=\tilde{\pi}'$,}\qquad\text{or}\qquad\pi'=\tilde{\pi}.
	\]
\end{proposition}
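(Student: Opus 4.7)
The plan is to adapt the classical de la Vall\'ee Poussin zero-free region argument to the Rankin--Selberg setting through the isobaric sum positivity trick, as implemented for $\mathrm{GL}_m\times\mathrm{GL}_{m'}$ by Moreno, Hoffstein--Ramakrishnan, Brumley, and Humphries--Thorner. Suppose $\rho=\beta+i\gamma$ is a nontrivial zero of $L(s,\pi\times\pi')$ close to the line $\re(s)=1$. The hypothesis $\pi'\in\{\tilde\pi,\tilde\pi'\}$ is precisely what allows us to construct an isobaric representation $\Pi$ (for instance, $\Pi=\pi|\det|^{i\gamma/2}\boxplus\tilde\pi|\det|^{-i\gamma/2}$ when $\pi'=\tilde\pi$) so that the auxiliary $L$-function
\[
D(s)=L(s,\Pi\times\tilde\Pi)
\]
enjoys three crucial properties: (i) $D(s)$ factors as a product of Rankin--Selberg $L$-functions attached to pairs in $\bigcup_{n,n'}\mathfrak F_n\times\mathfrak F_{n'}$, and so by \cref{prop:class} each factor belongs to $\mathfrak S$; (ii) the coefficients of $-D'/D$ are non-negative on prime powers, via the identity $a_{\Pi\times\tilde\Pi}(p^k)=\bigl|\sum_j\alpha_{j,\Pi}(p)^k\bigr|^2$; and (iii) one of the shifted factors vanishes to order at least $2$ at the real point $s=\beta$, owing to the zero $\rho$ of $L(s,\pi\times\pi')$ together with its complex conjugate.

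Next, I would combine the Hadamard product from (B), Stirling's formula for the gamma factors, and the zero-density bound of \cref{lem:number-of-zeros} applied to each factor of $D$ to derive
\[
0\le -\re\frac{D'}{D}(\sigma) \le \frac{A}{\sigma-1}-\sum_{\rho'}\re\frac{1}{\sigma-\rho'}+O\bigl((m+m')\log(C(\pi)C(\pi')(|\gamma|+3)^m)\bigr)
\]
for real $\sigma>1$ close to $1$, where $A$ is the order of the pole of $D$ at $s=1$, the conductor estimates of \cref{lem:AC} have been invoked, and $\rho'$ ranges over nontrivial zeros of the factors of $D$. Dropping all $\rho'$ except the zero at $s=\beta$ (which contributes at least $2(\sigma-\beta)^{-1}$) and choosing $\sigma=1+c/\bigl((m+m')\log(C(\pi)C(\pi')(|\gamma|+3)^m)\bigr)$ for a suitably small absolute $c>0$ balances the terms and gives the stated zero-free region, away from a possible real exceptional zero $\beta_1$.

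For the exceptional zero, the same framework with $\gamma=0$ shows that two distinct real zeros in the stated region would force $D(s)$ to acquire more vanishing near $s=1$ than its pole structure permits; hence $\beta_1$ is simple and unique. The listed self-duality conditions, $\pi=\tilde\pi$ with $\pi'=\tilde\pi'$ or $\pi'=\tilde\pi$, are exactly those under which $L(s,\pi\times\pi')$ has real Dirichlet coefficients, so that a real zero is possible; in the remaining cases the nontrivial zeros come in complex conjugate pairs off the real axis. The main obstacle will be to preserve the stated form of the denominator, namely $(m+m')\log(C(\pi)C(\pi')(|\gamma|+3)^m)$ rather than the coarser $(m+m')\log(C(\pi\times\pi')(|\gamma|+3)^{mm'})$: this requires combining the sharp conductor bounds of \cref{lem:AC} with a careful decomposition of the logarithmic-derivative estimate across the individual Rankin--Selberg factors of $D$, so that the $(|\gamma|+3)^m$ contribution (rather than $(|\gamma|+3)^{mm'}$) emerges from the shifted factors whose analytic conductor depends only on $C(\pi)C(\pi')(|\gamma|+3)^{O(m)}$.
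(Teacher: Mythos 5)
Your general strategy---constructing an isobaric representation $\Pi$ so that $D(s)=L(s,\Pi\times\tilde\Pi)$ has nonnegative coefficients, a controlled pole, and two shifted Rankin--Selberg factors picking up the zero, then running the de la Vall\'ee Poussin mechanism---is the right template and is indeed what underlies the results the paper quotes. But the paper does not reprove the zero-free region from scratch; it cites \cite[Theorem~2.1(1)]{HumphriesThorner2020} for $\pi'=\tilde\pi$ and \cite[Theorem~A.1]{Humphries2019-2} for self-dual $\pi'$, and in the latter case injects the three-term isobaric sum $\Pi=\pi\otimes|\det|^{i\gamma}\boxplus\tilde\pi\otimes|\det|^{-i\gamma}\boxplus\pi'$ into the argument via \cite[Lemma~a]{Hoffstein} and \cite[Lemma~5.9]{IK} to sharpen the degree dependence to $(m+m')\log(C(\pi)C(\pi')(|\gamma|+3)^m)$. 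Your attempt to rebuild this is partly derailed by a concrete error in the isobaric choice.

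The specific gap: for $\pi'=\tilde\pi$ you take $\Pi=\pi|\det|^{i\gamma/2}\boxplus\tilde\pi|\det|^{-i\gamma/2}$, and then
\[
L(s,\Pi\times\tilde\Pi)=L(s,\pi\times\tilde\pi)^{2}\,L(s+i\gamma,\pi\times\pi)\,L(s-i\gamma,\tilde\pi\times\tilde\pi).
\]
None of these factors is known to vanish at the real point $s=\beta$. The first factor vanishes at $\beta+i\gamma$ (not at $\beta$), and we have no information that $L(\beta+i\gamma,\pi\times\pi)=0$ or $L(\beta-i\gamma,\tilde\pi\times\tilde\pi)=0$. So your claimed property (iii) --- that one of the shifted factors vanishes to order $\ge 2$ at the real point $\beta$ --- simply fails for this choice of $\Pi$, and the term $2(\sigma-\beta)^{-1}$ that the whole de la Vall\'ee Poussin inequality hinges on never appears. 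What does work for $\pi'=\tilde\pi$ is $\Pi=\pi\boxplus\pi\otimes|\det|^{i\gamma}$, which produces
\[
L(s,\Pi\times\tilde\Pi)=L(s,\pi\times\tilde\pi)^{2}\,L(s+i\gamma,\pi\times\tilde\pi)\,L(s-i\gamma,\pi\times\tilde\pi),
\]
and now $L(s\pm i\gamma,\pi\times\tilde\pi)$ each vanish at $s=\beta$ (using $\rho$ and $\bar\rho$), giving the required order-two zero at a real point against the order-two pole at $s=1$; a matching two-term $\boxplus$ that places $\pi\times\pi$ and $\tilde\pi\times\tilde\pi$ factors in $D$ does not. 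In general you should check that the shifted factors in $D(s)$ are the $L$-functions $L(s\pm i\gamma,\pi\times\pi')$ themselves (or their duals), not $L(s\pm i\gamma,\pi\times\pi)$. Beyond this, the exceptional-zero bookkeeping (uniqueness, simplicity, and the deduction of self-duality from the existence of $\beta_1$) is asserted rather than argued; it relies on carefully comparing the total order of real vanishing one can force in a judiciously chosen $D$ against its exact pole order at $s=1$, and on using the functional equation to relate a real zero of $L(s,\pi\times\pi')$ to one of $L(s,\tilde\pi\times\tilde\pi')$. ``Real Dirichlet coefficients'' is a heuristic, not a substitute for that comparison.
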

\begin{remark}
This implies a zero-free region for $L(s,\pi)=L(s,\pi\times\mathbbm{1})$.  If $\beta_1$ exists, then $\pi=\tilde{\pi}$.
\end{remark}

\begin{proof}
	When $\pi'=\tilde{\pi}$, this is \cite[Theorem 2.1(1)]{HumphriesThorner2020}.  When $\pi'=\tilde{\pi}'$, this is \cite[Theorem A.1]{Humphries2019-2} with a small improvement in the dependence on $m$ and $m'$ stemming from the fact that if $\Pi$ is the isobaric automorphic representation $\pi\otimes|\det|^{i\gamma}\boxplus\tilde{\pi}\otimes|\det|^{-i\gamma}\boxplus\pi'$, then the Dirichlet coefficients of $\log L(s,\Pi\times\tilde{\Pi})$ are nonnegative \cite[Lemma a]{Hoffstein}.  This produces an improved degree dependence in \cite[Lemma 5.9]{IK} that we insert into the proof of \cite[Theorem A.1]{Humphries2019-2}.
\end{proof}

\begin{proposition}
	\label{prop:ZFR_Brumley}
	Let $\pi\in\mathfrak{F}_m$ and $\pi'\in\mathfrak{F}_{m'}$.  Assume that $\pi'\neq\tilde{\pi}$.  For all $\epsilon>0$, there exists an effectively computable constant $c_{m,m',\epsilon}>0$ such that $L(s,\pi\times\pi')\neq 0$ in the region
	\begin{equation}
	\label{eqn:ZFRBrumley}
	\re(s)\geq 1-\frac{c_{m,m',\epsilon}}{((C(\pi)C(\pi'))^{m+m'}(3+|t|)^{m'm})^{1-\frac{1}{m+m'}+\frac{\epsilon}{2}}}.
	\end{equation}
\end{proposition}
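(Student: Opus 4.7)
The plan is to follow Brumley's strategy for narrow zero-free regions: introduce an auxiliary isobaric automorphic representation designed to convert a putative off-axis zero of $L(s,\pi\times\pi')$ into a double real zero of a larger Rankin--Selberg $L$-function that also has a double pole at $s=1$, and then pit this against a sharp upper bound on the edge of the critical strip of Li--Molteni type.

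First, I would argue by contradiction: suppose $L(\beta+i\gamma,\pi\times\pi')=0$ with $\beta$ very close to $1$, and form the auxiliary isobaric representation
\[
\Pi \;:=\; \pi\otimes|\det|^{i\gamma/2} \;\boxplus\; \tilde\pi'\otimes|\det|^{-i\gamma/2}
\]
of $\GL_{m+m'}(\A_\Q)$. A direct expansion of the Rankin--Selberg tensor product yields
\[
L(s,\Pi\times\tilde\Pi) \;=\; L(s,\pi\times\tilde\pi)\, L(s,\pi'\times\tilde\pi')\, L(s+i\gamma,\pi\times\pi')\, L(s-i\gamma,\tilde\pi\times\tilde\pi').
\]
The first two factors each have a simple pole at $s=1$, so the product has a double pole there; meanwhile the hypothesised zero of $L(s,\pi\times\pi')$ at $\beta+i\gamma$ and its complex-conjugate partner (a zero of $L(s,\tilde\pi\times\tilde\pi')$ at $\beta-i\gamma$) produce a zero of $L(s,\Pi\times\tilde\Pi)$ of order at least $2$ at the real point $s=\beta$. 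This step also uses that $\Pi\neq\tilde\Pi$, guaranteed by the hypothesis $\pi'\neq\tilde\pi$, so the pole structure and degree are exactly as claimed.

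Next, I would exploit the nonnegativity of the Dirichlet coefficients of $\log L(s,\Pi\times\tilde\Pi)$, a standard Rankin--Selberg positivity. Combined with the Hadamard factorisation of $L(s,\Pi\times\tilde\Pi)$ and the zero-counting bound \cref{lem:number-of-zeros}, a de la Vall\'ee Poussin inequality at the real point $\sigma = 1 + \eta$ balances the positive contribution from the double pole at $s=1$ against the negative contribution from the double zero at $s=\beta$, producing a lower bound of the schematic form
\[
1-\beta \;\gtrsim\; \bigl(\text{sharp upper bound on }|L(1+it,\Pi\times\tilde\Pi)|\bigr)^{-1}
\]
after optimising $\eta$. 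The crucial analytic input is then a Li--Molteni-type refinement of the Phragm\'en--Lindel\"of convexity principle, which uses the pointwise bound on Satake parameters in (C) to prove
\[
|L(1+it,\Pi\times\tilde\Pi)| \;\ll_\varepsilon\; \bigl(C(\Pi\times\tilde\Pi)(3+|t|)^{(m+m')^2}\bigr)^{\frac{1}{2}-\frac{1}{2(m+m')}+\frac{\varepsilon}{4}}.
\]
Inserting this bound and converting $C(\Pi\times\tilde\Pi)$ to $C(\pi)$ and $C(\pi')$ via \cref{lem:AC} yields a region of the announced shape.

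The main obstacle will be the bookkeeping of exponents. Tracking the exact route from the Li--Molteni exponent $\tfrac{1}{2}-\tfrac{1}{2(m+m')}$ on the edge bound to the exponent $1-\tfrac{1}{m+m'}+\tfrac{\varepsilon}{2}$ in the denominator of the zero-free region requires care, as does the Archimedean factor $(3+|t|)^{m'm}$, which arises from the analytic conductor of $\Pi\times\tilde\Pi$ at height $\gamma$ and must be absorbed into the $\varepsilon$-worsening of the exponent. One must also verify that every step remains uniform as $\gamma$ varies, since the auxiliary representation $\Pi$ depends on $\gamma$ through the shift $|\det|^{\pm i\gamma/2}$.
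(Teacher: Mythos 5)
Your proposal is a faithful reconstruction of Brumley's proof of the narrow zero-free region (the auxiliary isobaric representation $\Pi = \pi\otimes|\det|^{i\gamma/2}\boxplus\tilde\pi'\otimes|\det|^{-i\gamma/2}$, the factorisation of $L(s,\Pi\times\tilde\Pi)$, the positivity of $-\frac{L'}{L}$, a de la Vall\'ee Poussin argument at $\sigma=1+\eta$, and a Li--Molteni-type edge-of-strip bound). The paper, however, does not re-prove this: its entire proof of \cref{prop:ZFR_Brumley} consists of the single line ``This follows from \cite[Theorem A.1]{Lapid} and \cref{lem:AC},'' i.e.\ a direct citation of Brumley's theorem from the appendix to Lapid's paper, together with the conductor comparison $C(\pi\times\pi')\leq C(\pi)^{m'}C(\pi')^m$ from \cref{lem:AC} to pass from the conductor $C(\pi\times\pi')$ appearing in the cited result to the quantities $C(\pi),C(\pi')$ appearing in \eqref{eqn:ZFRBrumley}.

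So your route is not wrong -- it is essentially the proof living \emph{inside} the black box that the paper leaves closed -- but it is doing a great deal more work than the paper itself. If you wanted to carry your sketch to completion you would have to nail down the exponent bookkeeping that you flagged (the passage from the edge-of-strip exponent $\frac12-\frac{1}{2(m+m')}$ on $L(1+it,\Pi\times\tilde\Pi)$ to the zero-free-region exponent $1-\frac{1}{m+m'}+\frac{\epsilon}{2}$, and the source of the Archimedean factor $(3+|t|)^{m'm}$, which arises because the analytic conductor of $\Pi$ is inflated by the shift $|\det|^{\pm i\gamma/2}$); but since the cited theorem already does exactly this, the paper's one-line deduction is the intended argument, and your reconstruction, while correct in outline, re-derives rather than invokes it.
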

\begin{proof}
This follows from \cite[Theorem A.1]{Lapid} and \cref{lem:AC}.
\end{proof}

Finally, property (F) for $L(s,\pi)$ and $L(s,\pi\times\pi')$ follows from the next result.
\begin{proposition}
\label{prop:zero_repulsion}
Let $\pi \in \mathfrak{F}_{m}$ and $\pi' \in \mathfrak{F}_{m'}$. If $\beta_0>\frac{1}{2}$ is a real simple zero of $L(s,\pi\times\pi')$, then $\beta_0 \leq 1- C(\pi\times\pi')^{-\Cr{Siegel_effective}m'm}$, and apart from $s = \beta_0 $, $L(s, \pi \times \pi')$ is nonzero in the region
\[
\Re(s) \geq 1-\Cr{Siegel_1} \dfrac{\log \Big(\dfrac{\Cr{Siegel_2}}{(1-\beta_0 )
m'm\log(C(\pi\times\pi')(|\Im(s)|+3)^{m'm})}\Big)}
{m'm\log(C(\pi\times\pi')(|\Im(s)|+3)^{m'm})}.
\]
\end{proposition}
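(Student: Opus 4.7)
The plan is to adapt the Deuring--Heilbronn--Linnik zero-repulsion mechanism to the Rankin--Selberg setting by exploiting positivity of Dirichlet coefficients of an auxiliary L-function, following the template developed in \cite{Hoffstein} and \cite{Lapid}. The target inequality has the sharp logarithmic Linnik shape, so a direct contour-shifting argument is inadequate and a positivity input is required.

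For a putative nontrivial zero $\rho = \beta + i\gamma \neq \beta_0$ of $L(s,\pi\times\pi')$, I would first construct an auxiliary isobaric automorphic representation
\[
\Pi_\gamma = \mathbbm{1} \boxplus \pi \otimes |\det|^{i\gamma/m} \boxplus \tilde{\pi} \otimes |\det|^{-i\gamma/m} \boxplus \pi' \otimes |\det|^{i\gamma/m'} \boxplus \tilde{\pi}' \otimes |\det|^{-i\gamma/m'},
\]
so that $L(s,\Pi_\gamma \times \tilde{\Pi}_\gamma)$ factors by distributivity of Rankin--Selberg over isobaric sums and in particular contains shifted copies of $L(s \pm i\gamma, \pi\times\pi')$ and $L(s \pm i\gamma, \tilde{\pi}\times\tilde{\pi}')$ as factors. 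By \cite[Lemma a]{Hoffstein}, the Dirichlet coefficients of $-(L'/L)(s, \Pi_\gamma \times \tilde{\Pi}_\gamma)$ are non-negative on prime powers, hence this quantity is non-negative for real $s > 1$; by \cref{lem:AC}, its analytic conductor is bounded by a power of $C(\pi\times\pi')(|\gamma|+3)^{m'm}$ with polynomial dependence on $m+m'$.

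Next, I would combine Hadamard factorisation (property (B)) with the zero-counting bound \cref{lem:number-of-zeros} to rewrite, at a real test point $\sigma = 1 + \delta$ for small $\delta > 0$,
\[
0 \le -\frac{L'}{L}(\sigma, \Pi_\gamma \times \tilde{\Pi}_\gamma) = \frac{r}{\sigma-1} - \sum_{\rho^*} \Re \frac{1}{\sigma-\rho^*} + O\!\left((m+m')^2 \log(C(\pi\times\pi')(|\gamma|+3)^{m'm})\right),
\]
where $r$ is the pole order at $s=1$. Among the zeros $\rho^*$ of $L(s,\Pi_\gamma\times\tilde{\Pi}_\gamma)$, both $\beta_0$ and $\beta$ occur on the real axis---the latter arising from $\rho$ through the $-i\gamma$ shift applied to the copy of $L(s-i\gamma, \pi\times\pi')$. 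Discarding all other zero contributions by non-negativity of $\Re(\sigma-\rho^*)^{-1}$ for $\sigma > 1$, then choosing $\delta = \kappa(1-\beta)$ for an absolute constant $\kappa > 0$ and solving for $1-\beta$, I expect to extract the Linnik-shape inequality
\[
1 - \beta \gg \frac{\log\!\left(\Cr{Siegel_2}/\big[(1-\beta_0)\,m'm\log(C(\pi\times\pi')(|\gamma|+3)^{m'm})\big]\right)}{m'm\log(C(\pi\times\pi')(|\gamma|+3)^{m'm})},
\]
which is the second assertion. The Siegel-type upper bound $\beta_0 \le 1 - C(\pi\times\pi')^{-\Cr{Siegel_effective}m'm}$ emerges as the degenerate $\gamma=0$ case: retaining only the pole and the $\beta_0$ contribution at $\sigma = 1 + (1-\beta_0)$ yields $(1-\beta_0)^{-1} \ll C(\pi\times\pi')^{O(m'm)}$.

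The hard part will be the combinatorial bookkeeping of the factorisation of $L(s,\Pi_\gamma\times\tilde{\Pi}_\gamma)$: one must verify that the pole order $r$ at $s=1$ is bounded (controlled by whether $\pi$, $\pi'$, and $\pi\times\pi'$ are self-dual), and that the zeros $\beta_0$ and $\beta$ appear with positive integer multiplicities unaffected by cancellation against zeros of the other factors such as $L(s,\pi\times\tilde{\pi}')$ and $L(s,\tilde{\pi}\times\pi')$. Checking that the $\pm i\gamma$ shifts interact correctly with the analytic conductor bound of \cref{lem:AC} is technical but routine; the same mechanism underlies \cite[Theorem A.1]{Humphries2019-2} and \cite[Theorem A.1]{Lapid} invoked for \cref{prop:ZFR_SD,prop:ZFR_Brumley}, so I would import the relevant computations with the bookkeeping adjusted to keep the multiplicities of $\beta_0$ and $\beta$ transparent.
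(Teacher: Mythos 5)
Your overall strategy — invoke positivity of Dirichlet coefficients for an auxiliary Rankin--Selberg $L$-function via \cite[Lemma a]{Hoffstein}, then run a Deuring--Heilbronn argument and control the conductor via \cref{lem:AC} — is in the right spirit, but both the specific auxiliary object and the shape of the resulting argument differ from what the paper does, and your construction as written contains an error.

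First, the twist exponents are wrong. For $\pi \otimes |\det|^{i\alpha}$ against $\pi'\otimes|\det|^{i\alpha'}$ one gets $L(s+i(\alpha+\alpha'),\pi\times\pi')$, so to produce a shifted copy $L(s\pm i\gamma,\pi\times\pi')$ inside $L(s,\Pi_\gamma\times\tilde\Pi_\gamma)$ you need the twists applied to the $\pi$-block and $\pi'$-block to sum to $\pm i\gamma$. With your choices $i\gamma/m$ and $i\gamma/m'$, the shift is $i\gamma(\tfrac1m+\tfrac1{m'})\neq i\gamma$ in general. Second, the inclusion of the extra $\mathbbm{1}$ block and the unitary twists inflates the pole structure of $L(s,\Pi_\gamma\times\tilde\Pi_\gamma)$ substantially: it acquires pole order $5$ at $s=1$ (from the five dual pairings), plus additional poles \emph{off} the real axis coming from self-pairings such as $(\pi\otimes|\det|^{i\gamma/m})\times(\pi\otimes|\det|^{i\gamma/m})=L(s+2i\gamma/m,\pi\times\tilde\pi)$, which has a pole at $s=1-2i\gamma/m$. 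These extra, complex poles sit near $s=1$ for small $\gamma$ and would contaminate the positivity inequality you write down at a real test point; managing them would require significant extra work and is not just ``bookkeeping.''

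The paper's argument is simpler and avoids these issues. For $\pi'=\tilde\pi$ it cites \cite[Proposition 5.3 and Corollary 5.4]{HumphriesThorner2020} directly. For $\pi'\neq\tilde\pi$, it applies the same technique to the auxiliary function
\[
D(s)=L(s,\pi\times\tilde\pi)\,L(s,\pi'\times\tilde\pi')\,L(s,\pi\times\pi')\,L(s,\tilde\pi\times\tilde\pi')
= L\bigl(s,(\pi\boxplus\pi')\times\widetilde{(\pi\boxplus\pi')}\bigr),
\]
with \emph{no} unitary twists and no extra $\mathbbm{1}$ factor. This $D(s)$ has nonnegative Dirichlet coefficients by \cite[Lemma a]{Hoffstein}, a pole of order exactly $2$ at $s=1$ (since $\pi'\neq\tilde\pi$), and the key structural observation is that whenever $\rho$ is a nontrivial zero of $L(s,\pi\times\pi')$, its conjugate $\bar\rho$ is a nontrivial zero of $L(s,\tilde\pi\times\tilde\pi')$, so \emph{both} $\rho$ and $\bar\rho$ appear in the zero set of $D(s)$ with at least the expected multiplicity; this, not a $\pm i\gamma$ shift, is what delivers the $\gamma$-dependent repulsion. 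The conductor of $D(s)$ is then bounded in terms of $C(\pi\times\pi')$ using \cref{lem:AC}. You should replace your $\Pi_\gamma$ by $\pi\boxplus\pi'$ (no twists), track the pole order $2$ rather than $5$, and use the $\rho\leftrightarrow\bar\rho$ pairing rather than shifted factors.
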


\begin{proof}
When $\pi'=\tilde{\pi}$, this was shown in \cite[Proposition 5.3 and Corollary 5.4]{HumphriesThorner2020}.  When $\pi'\neq\tilde{\pi}$, one applies the same ideas in \cite[Proposition 5.3 and Corollary 5.4]{HumphriesThorner2020} to the $L$-function
\[
D(s) = L(s,\pi\times\tilde{\pi})L(s,\pi'\times\tilde{\pi}')L(s,\pi\times\pi')L(s,\tilde{\pi}\times\tilde{\pi}')
\]
instead of $L(s,\pi\times\tilde{\pi})$, which has nonnegative Dirichlet coefficients by \cite[Lemma a]{Hoffstein}.  The key observation is that while $D(s)$ has a pole of order 2 at $s=1$, if $\rho$ is a nontrivial zero of $L(s,\pi\times\pi')$, then $\bar{\rho}$ is a nontrivial zero of $L(s,\tilde{\pi}\times\tilde{\pi}')$.  It remains to bound the analytic conductor of $D(s)$ in terms of $C(\pi\times\pi')$, which is accomplished using \cref{lem:AC}.
\end{proof}

\section{Proofs of prime number theorems for $L(s,\pi)$ and $L(s,\pi\times\pi')$}
\label{sec:final_proofs}

Let $\pi\in\mathfrak{F}_m$ and $\pi'\in\mathfrak{F}_{m'}$.  To prove Theorems \ref{thm:PNT1}, \ref{thm:PNT2}, and \ref{thm:PNT3}, it remains (after invoking \cref{thm:main}) to bound $e^{-\eta_{\pi}(x)}$ and $e^{-\eta_{\pi\times\pi'}(x)}$ for $x\geq 3$ using \cref{prop:ZFR_SD,prop:ZFR_Brumley}.

\begin{lemma}\label{Humphries-Thorner}
If $\pi\in\mathfrak{F}_m$ and $\pi'\in\mathfrak{F}_{m'}$ satisfy \eqref{eqn:special}, then
\[
e^{-\eta_{\pi\times\pi'}(x)} \leq \exp\Big(-\Cr{ZFR}\frac{\log x}{(m+m')\log(C(\pi)C(\pi'))+\sqrt{m(m+m')\Cr{ZFR}\log x}}\Big).
\]
\end{lemma}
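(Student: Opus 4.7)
The plan is to read off $\delta_{\pi\times\pi'}(t)$ from \cref{prop:ZFR_SD}, substitute into the definition \eqref{eqn:eta_pi_def} of $\eta_{\pi\times\pi'}(x)$, and reduce to a one-variable optimization which I handle by a balancing argument. Specifically, \cref{prop:ZFR_SD} allows us to take
\[
\delta_{\pi\times\pi'}(t) = \frac{\Cr{ZFR}}{(m+m')\bigl(\log(C(\pi)C(\pi')) + m\log t\bigr)}\qquad(t\geq 3)
\]
in property (E) for $L(s,\pi\times\pi')$. Abbreviating $c=\Cr{ZFR}$, $d=m+m'$, $M=\log(C(\pi)C(\pi'))$, $L=\log x$, $u=\log t$, and $K:=\sqrt{cLdm}$, the claim of the lemma is equivalent to the bound $\eta_{\pi\times\pi'}(x)\geq g$, where $g:=cL/(dM+K)$. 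Since $\log 3>0$, it suffices to prove
\[
f(u):=\frac{cL}{d(M+mu)}+u\geq g\qquad\text{for every }u\geq 0.
\]

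First I would handle the range $0\leq u\leq K/(dm)$. Here $dmu\leq K$ gives $d(M+mu)\leq dM+K$, so the first summand of $f(u)$ alone already exceeds $g$ and the inequality is immediate. For the complementary range $u>K/(dm)$, I note that
\[
f(u)-g=\frac{cL(K-dmu)}{d(M+mu)(dM+K)}+u,
\]
whose first summand is now negative. Clearing denominators and using $(dmu-K)/u\leq dm$ reduces $f(u)\geq g$ to the clean algebraic estimate
\[
(M+mu)(dM+K)\geq cLm.
\]
The case hypothesis gives $M+mu\geq mu>K/d=\sqrt{cLm/d}$, while trivially $dM+K\geq K=\sqrt{cLdm}$; multiplying these two lower bounds produces exactly $cLm$, as required.

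Combining the two cases yields $\eta_{\pi\times\pi'}(x)\geq g$, and exponentiating gives the lemma. The argument is essentially elementary algebra; the only step that requires forethought is identifying the cutoff $u=K/(dm)$ for the case split, which is the location of the minimum of $f$ on $u\geq 0$ and simultaneously the balancing point between its two summands. I do not expect any analytic obstacle beyond the input zero-free region provided by \cref{prop:ZFR_SD}.
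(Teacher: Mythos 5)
Your proof is correct and reaches the stated bound, but it takes a cleaner algebraic route than the paper. The paper also reduces to minimizing the same function $\phi_x(u)=\frac{cL}{d(M+mu)}+u$ over $u\ge 0$, but it proceeds by calculus: it computes the unique positive critical point $u_0=\sqrt{cL/(dm)}-M/m$, splits according to whether $u_0>0$, evaluates $\phi_x$ at $u_0$ and at $0$ to obtain
\[
\phi_x(u)\ge\min\Bigl\{\sqrt{\tfrac{cL}{dm}},\;\tfrac{cL}{dM}\Bigr\},
\]
and then finishes with the harmonic-mean inequality $\min\{a,b\}\ge ab/(a+b)$, which is exactly what introduces the denominator $dM+K$ in the final bound. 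Your argument bypasses both the critical-point computation and the harmonic-mean lemma: you guess the target $g=cL/(dM+K)$ directly and verify $\phi_x(u)\ge g$ by splitting at $u=K/(dm)$, where the first summand of $\phi_x$ crosses $g$. In the first range the first summand alone dominates, and in the second range you clear denominators, discard $-K/u<0$, and reduce to $(M+mu)(dM+K)\ge cLm$, which follows by multiplying the two cheap lower bounds $M+mu>K/d$ and $dM+K\ge K$. Both approaches are elementary; yours avoids the separate $\min\{a,b\}\ge ab/(a+b)$ step, while the paper's makes the optimal exponent $\phi_x(u_0)$ visible (which could matter if one wanted a sharper constant). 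One cosmetic remark: your cutoff $u=K/(dm)=\sqrt{cL/(dm)}$ is not the minimizer of $\phi_x$ (that is $u_0=\sqrt{cL/(dm)}-M/m$, shifted left by $M/m$); it is rather the crossover where $\tfrac{cL}{d(M+mu)}=g$, and it coincides with the minimizer only of the simplified function with $M=0$. This does not affect the proof, only the heuristic description at the end.
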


\begin{proof}
By \eqref{eqn:eta_pi_def} with the change of variables $t \mapsto e^{u}$ and \cref{prop:ZFR_SD}, we have that
\[
\eta_{\pi\times\pi'}(x) \geq \inf_{u \geq 0} \phi_{x}(u),\qquad \phi_{x}(u) = \frac{\Cr{ZFR}\log x}{(m+m')\log(C(\pi)C(\pi'))+m(m+m')u} +u.
\]
Note that $\lim_{u\to\infty}\phi_{x}(u)= \infty$.  The equation $\frac{d}{du}\phi_x(u)=0$ has the unique positive solution
\[
u = u_0 := \Big(\frac{\Cr{ZFR}\log x}{m(m+m')}\Big)^{\frac{1}{2}}-\frac{\log(C(\pi)C(\pi'))}{m}.
\]
We have that $u_0 > 0$ if and only if $x> \exp(\frac{m+m'}{\Cr{ZFR}m}(\log(C(\pi)C(\pi')))^2)$, so
{\small\begin{align*}
\phi_x(u) &\geq \begin{cases}
	\phi_x(u_0)&\mbox{if $\displaystyle x> \exp\Big(\frac{m+m'}{\Cr{ZFR}m}(\log(C(\pi)C(\pi')))^2\Big)$,}\\
	\phi_x(0)&\mbox{if $\displaystyle 3\leq x\leq  \exp\Big(\frac{m+m'}{\Cr{ZFR}m}(\log(C(\pi)C(\pi')))^2\Big)$}
\end{cases}
\vspace{1mm}\\
&= \begin{cases}
\displaystyle 2\Big(\frac{\Cr{ZFR}\log x}{m(m+m')}\Big)^{\frac{1}{2}}-\frac{\log (C(\pi)C(\pi'))}{m}&\mbox{if $\displaystyle x> \exp\Big(\frac{m+m'}{\Cr{ZFR}m}(\log(C(\pi)C(\pi')))^2\Big)$,}\vspace{1mm}\\
\displaystyle \frac{\Cr{ZFR}\log x}{(m+m')\log(C(\pi)C(\pi'))}&\mbox{if $\displaystyle 3\leq x\leq  \exp\Big(\frac{m+m'}{\Cr{ZFR}m}(\log(C(\pi)C(\pi')))^2\Big)$}
\end{cases}\\
&\geq \min\Big\{\Big(\frac{\Cr{ZFR}\log x}{m(m+m')}\Big)^{\frac{1}{2}},\frac{\Cr{ZFR}\log x}{(m+m')\log(C(\pi)C(\pi'))}\Big\}.
\end{align*}}%
Since $\exp(-\min\{a,b\})\leq \exp(-\frac{a b}{a+b})$ when $a>0$ and $b>0$, the lemma follows.
\end{proof}
\begin{proof}[Proof of \cref{thm:PNT1,thm:PNT2}]
\cref{thm:PNT2} follows from \cref{thm:main} (with $A=2$) and \cref{Humphries-Thorner}.  We restrict the range of $x$ in order to absorb the factor of $(m'm)^{\Cr{main3}(m'm)^3}$ in the error term in \cref{thm:main}.  \cref{thm:PNT1} follows from \cref{thm:PNT2} by choosing $\pi'=\mathbbm{1}$.
\end{proof}

We perform similar analysis using Brumley's narrow zero-free region.
\begin{lemma}
\label{Brumley}
Let $\pi\in\mathfrak{F}_m$ and $\pi'\in\mathfrak{F}_{m'}$ satisfy $\pi'\neq\tilde{\pi}$.  Let $0<\epsilon<1$, and let $c_{m,m',\epsilon}$ be as in \eqref{eqn:ZFRBrumley}.  Let
\begin{equation}
\label{eqn:AB}
\mathcal{A} = c_{m,m',\epsilon}/(C(\pi) C(\pi'))^{(m+m')(1+\frac{\epsilon}{2})-1},\qquad \mathcal{B} = m'm (1-\tfrac{1}{m+m'}+\tfrac{\epsilon}{2}).
\end{equation}
If $x>\exp(3^{\mathcal{B}}/(\mathcal{A}\mathcal{B}))$, then
\[
e^{-\eta_{\pi\times\pi'}(x)} \leq (\mathcal{A}\mathcal{B}e\log x)^{-1/\mathcal{B}}.
\]
\end{lemma}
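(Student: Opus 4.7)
The plan is to identify, from \cref{prop:ZFR_Brumley}, the precise shape of $\delta_{\pi\times\pi'}$ arising in (E), and then reduce the bound on $e^{-\eta_{\pi\times\pi'}(x)}$ to a single-variable calculus exercise. Matching the zero-free region in \eqref{eqn:ZFRBrumley} against property (E), with the imaginary height playing the role of $T$ and the standard shift $T \mapsto T+3$ absorbing the $(3+|t|)$ in Brumley's bound, one finds
\[
\delta_{\pi\times\pi'}(t) \;=\; \frac{c_{m,m',\epsilon}}{\bigl((C(\pi)C(\pi'))^{m+m'} t^{m'm}\bigr)^{1-\frac{1}{m+m'}+\frac{\epsilon}{2}}} \;=\; \frac{\mathcal{A}}{t^{\mathcal{B}}},\qquad t\geq 3,
\]
where the algebraic identity $(m+m')\bigl(1-\tfrac{1}{m+m'}+\tfrac{\epsilon}{2}\bigr) = (m+m')(1+\tfrac{\epsilon}{2})-1$ matches the exponent in the definition of $\mathcal{A}$ in \eqref{eqn:AB}.

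Given this, the definition \eqref{eqn:eta_pi_def} becomes
\[
\eta_{\pi\times\pi'}(x) \;=\; \inf_{t\geq 3}\, g(t),\qquad g(t) := \frac{\mathcal{A}\log x}{t^{\mathcal{B}}} + \log t.
\]
I would minimize $g$ on $(0,\infty)$ by elementary calculus: $g'(t) = -\mathcal{A}\mathcal{B}t^{-\mathcal{B}-1}\log x + t^{-1}$, so the unique positive critical point is $t_0 = (\mathcal{A}\mathcal{B}\log x)^{1/\mathcal{B}}$. A quick check of $g''(t_0)$ (or just the sign of $g'$) shows this is a global minimum on $(0,\infty)$, and
\[
g(t_0) \;=\; \frac{1}{\mathcal{B}} + \frac{1}{\mathcal{B}}\log(\mathcal{A}\mathcal{B}\log x) \;=\; \frac{1}{\mathcal{B}}\log\bigl(\mathcal{A}\mathcal{B} e\log x\bigr).
\]

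The last thing to verify is that $t_0$ actually lies in the admissible range $[3,\infty)$ dictated by \eqref{eqn:eta_pi_def}; this is exactly the condition $\mathcal{A}\mathcal{B}\log x \geq 3^{\mathcal{B}}$, i.e., $x \geq \exp(3^{\mathcal{B}}/(\mathcal{A}\mathcal{B}))$, which is precisely the hypothesis of the lemma. When this holds, $\eta_{\pi\times\pi'}(x) \geq g(t_0) = \mathcal{B}^{-1}\log(\mathcal{A}\mathcal{B}e\log x)$, and exponentiating gives the desired bound $e^{-\eta_{\pi\times\pi'}(x)} \leq (\mathcal{A}\mathcal{B}e\log x)^{-1/\mathcal{B}}$. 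No step here is a serious obstacle; the only place requiring care is bookkeeping the exponents in $\mathcal{A}$ and $\mathcal{B}$ and confirming the shift $T\mapsto T+3$ in property (E) so that $\delta_{\pi\times\pi'}(t)$ really equals $\mathcal{A}/t^{\mathcal{B}}$ without an extraneous additive constant in $t$.
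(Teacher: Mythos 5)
Your proof is correct and takes essentially the same route as the paper: identify $\delta_{\pi\times\pi'}(t) = \mathcal{A}\,t^{-\mathcal{B}}$ from Brumley's zero-free region (the shift $T\mapsto T+3$ in property (E) precisely absorbing the $(3+|t|)$), minimize $\psi_x(t)=\mathcal{A}t^{-\mathcal{B}}\log x + \log t$ by calculus at $t_0 = (\mathcal{A}\mathcal{B}\log x)^{1/\mathcal{B}}$, verify $t_0\geq 3$ under the stated hypothesis, and evaluate $\psi_x(t_0)=\mathcal{B}^{-1}\log(\mathcal{A}\mathcal{B}e\log x)$. The paper additionally records the case $3\leq x\leq\exp(3^{\mathcal{B}}/(\mathcal{A}\mathcal{B}))$ (bounding $\psi_x$ at $t=3$), but that branch is not needed for the lemma's stated range of $x$, and your treatment is otherwise identical.
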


\begin{proof}
Let $\mathcal{A}$ and $\mathcal{B}$ be given by \eqref{eqn:AB}.  By \cref{prop:ZFR_Brumley} and \eqref{eqn:eta_pi_def}, we have that
\[
\eta_{\pi\times\pi'}(x) \geq \inf_{t \geq 3} \psi_{x}(t),\qquad \psi_{x}(t) = t^{-\mathcal{B}}\mathcal{A}\log x+\log t.
\]
Note that $\lim_{t\to\infty}\psi_{x}(t)= \infty$.  The equation $\frac{d}{dt} \psi_{x}(t) = 0$ has a unique positive solution $t_0 = (\mathcal{A}\mathcal{B} \log x)^{1/\mathcal{B}}$.  We have that $t_0>3$ if and only if $x>\exp(3^\mathcal{B}/(\mathcal{A}\mathcal{B}))$, in which case
\begin{align*}
\psi_x(t)\geq \begin{cases}
\phi_x(t_0)&\mbox{if $x>\exp(3^{\mathcal{B}}/(\mathcal{A}\mathcal{B}))$,}\\
\phi_x(3)&\mbox{if $3\leq x\leq \exp(3^{\mathcal{B}}/(\mathcal{A}\mathcal{B}))$}
\end{cases}= \begin{cases}
\frac{1+\log(\mathcal{A}\mathcal{B} \log x)}{\mathcal{B}}&\mbox{if $x>\exp(3^{\mathcal{B}}/(\mathcal{A}\mathcal{B}))$,}\\
\log 3+\frac{ \mathcal{A} \log x}{3^{\mathcal{B}}}&\mbox{if $3\leq x\leq \exp(3^{\mathcal{B}}/(\mathcal{A}\mathcal{B}))$.}
\end{cases}
\end{align*}
The lemma now follows.
\end{proof}
\begin{proof}[Proof of \cref{thm:PNT3}]
If $A\geq 2$ and $x\geq\exp(3^\mathcal{B} / (\mathcal{A}\mathcal{B}))$, then
\[
e^{-(1-\frac{1}{A})\eta_{\pi\times\pi'}(x)} \leq (\mathcal{A}\mathcal{B}e\log x)^{-(1-\frac{1}{A})/\mathcal{B}}
\]
by  \cref{Brumley}.  If $A=2(m+m')$, then
\[
(\mathcal{A}\mathcal{B}e\log x)^{-(1-\frac{1}{A})/\mathcal{B}}\ll_{m,m',\epsilon}(C(\pi)C(\pi'))^{\frac{1}{m}+\frac{1}{m'}-\frac{1}{2m'm}}(\log x)^{-\frac{2(m+m')-1}{m'm((2+\epsilon)(m+m')-2)}}
\]
If we let $\epsilon=(m+m')^{-2}$ and impose the condition $x\geq \exp((C(\pi)C(\pi'))^{2(m+m')^2})$, then
\[
(C(\pi)C(\pi'))^{\frac{1}{m}+\frac{1}{m'}-\frac{1}{2m'm}}(\log x)^{-\frac{2(m+m')-1}{m'm((2+\epsilon)(m+m')-2)}}\ll_{m,m'}(\log x)^{-\frac{1}{m'm}}.
\]
\cref{thm:PNT3} follows from this estimate, \cref{thm:main}, and \cref{lem:AC}.
\end{proof}

\bibliographystyle{abbrv}
\bibliography{bibliography.bib}

\end{document}